\documentclass[12pt, a4paper]{article}

\usepackage{cmap}
\usepackage{mathtext}
\usepackage[T2A]{fontenc}
\usepackage[utf8]{inputenc}

\usepackage[all]{xy}

\usepackage{leftidx}

\usepackage{hyperref}

\usepackage{amsmath,amssymb,amscd,mathrsfs,bm}

\usepackage{amsthm} 

\newcommand{\entropy}{\mathop{\mathsf{h}}\nolimits}
\newcommand{\ext}{\mathop{\mathrm{Ext}}\nolimits}

\newcommand{\gd}{\mathop{\mathrm{gl. dim}}\nolimits}
\newcommand{\gkd}{\mathop{\mbox{GK-dim}}\nolimits}

\newcommand{\Hom}{\mathop{\mathsf{Hom}}\nolimits}
\newcommand{\hpol}{\mathsf{h}^\mathrm{pol}}  
\newcommand{\inte}{\ensuremath{\mathbb Z}}
\newcommand{\kkk}{\ensuremath{\Bbbk}}

\newcommand{\Mod}{\mathop{\mathsf{Mod}}\nolimits}
\newcommand{\Ob}{\mathop{\mathsf{Ob}}\nolimits}
\newcommand{\PP}{\ensuremath{\mathbb P}}
\newcommand{\Qgr}{\mathop{\mathsf{Qgr}}\nolimits}
\newcommand{\qgr}{\mathop{\mathsf{qgr}}\nolimits}
\newcommand{\rad}{\mathop{\mathsf{rad}}\nolimits}
\newcommand{\real}{\ensuremath{\mathbb R}}
\newcommand{\RHom}{\mathop{\mathsf{RHom}}\nolimits}
\newcommand{\rk}{\mathop{\mathrm{rk}}\nolimits}
\newcommand{\tor}{\mathop{\mathrm{Tor}}\nolimits}

\newtheorem{theorem}{\quad Theorem}[section]
\newtheorem{definition}[theorem]{\quad Definition}
\newtheorem{corollary}[theorem]{\quad Corollary}
\newtheorem{proposition}[theorem]{\quad Proposition}
\newtheorem{lemma}[theorem]{\quad Lemma}

\newtheorem{conj}[theorem]{\quad Conjecture}

\theoremstyle{definition}
\newtheorem{example}[theorem]{\quad Example}

\begin{document}

\title{\textsf{Growth in noncommutative algebras and entropy in derived categories
}}
\author{Dmitri Piontkovski \thanks{HSE University, Myasnitskaya ul. 20, Moscow 101000, Russia, \sf{dpiontkovski@hse.ru}}}

\date{}


\maketitle

\begin{abstract}
A noncommutative projective variety is defined, following Artin and Zhang, by a graded coherent algebra $A$. The category of coherent sheaves is then the quotient $\mathsf{qgr}(A)$ of the category of finitely presented graded modules by the subcategory of torsion modules.  We consider the categorical and polynomial entropies of the Serre twist, that is, of the degree-shift functor on the bounded derived category $\mathbf{D}^b(\mathsf{qgr}(A))$. These two types of entropy can be viewed as analogues of  dimension for the noncommutative variety.

We relate these invariants to the growth of the algebra $A$.  For algebras of finite global dimension, the categorical and polynomial entropies of the Serre twist  are bounded above by the growth entropy and the Gelfand--Kirillov dimension of the algebra. Moreover, equality holds for regular algebras and for coordinate rings of smooth projective varieties.  In contrast, the  polynomial entropy vanishes for monomial algebras of polynomial growth, so in this case the inequality is strict. 
\end{abstract}

\section{Introduction}

Let $A$ be a finitely generated  graded coherent 
associative algebra with 1 over a field $k$.
Then 
it is natural to regard  $A$ as a coordinate ring of some  noncommutative projective variety~\cite{verevkin1992noncommutative}, \cite{Artin1994NoncommutativeSchemes}.
What is the dimension of such a  variety?

In the classical case of a commutative algebra $A$ and the associated projective variety $V$, one can calculate the dimension of the variety via  the Gelfand--Kirillov dimension of $A$, that is, $\dim V  = \gkd A -1$ for
$$\gkd A = \varlimsup_{n\to\infty} \frac{ \log  (\dim A_0+\dots+ \dim A_n)}{\log n} = 
\varlimsup_{n\to\infty} \frac{ \log  \dim A_n}{\log n} +1
.$$
However, in typical noncommutative examples (such as the case of free associative algebra $A$ with at least two generators) the growth of $A$ is exponential, so that the Gelfand--Kirillov dimension is infinite. 
 
In the case of exponential growth, the asymptotic behavior of the graded component dimensions  is measured by the growth entropy of $A$. We define it as the 
logarithm\footnote{Note that in the original definition~\cite{Newman2000TheAlgebras} the term {\em entropy} of graded algebra  is used for the growth exponent $H(A) = \varlimsup_{n\to \infty} \sqrt[n]{\dim A_n} $. Here  
we prefer the logarithmic form since 
it makes comparison with entropies of other kinds more natural, see~\cite{lu2023entropy} and  the results below.}
 of the growth exponent: 
\begin{equation*}
\mathsf{h} (A) = \log \varlimsup_{n\to \infty} \sqrt[n]{\dim A_n} = \varlimsup_{n\to \infty} \frac{\log \dim A_n}{n}.
\end{equation*}

In many cases the invariants $\gkd A$ 
and $\mathsf{h} (A)$ play the role of a dimension for noncommutative varieties. 
For example, for all known Noetherian ``noncommutative projective spaces'' $\PP^{d-1}$ defined by Artin--Schelter regular algebras $A$  of global dimension $d$, the Gelfand--Kirillov dimension  $\gkd A $ is equal to $d$~\cite[Sec.~3.3]{rogalski2024artin}. 
To look over examples of exponential growth, note that there is an infinite sequence $A^n$ 
of graded algebras corresponding to pairwise non-isomorphic versions of the noncommutative projective line~\cite{piontkovski2008coherent}. All algebras $A^n$ have the same global dimension (two) and the same Gelfand--Kirillov dimension (infinity), however,  one can distinguish these algebras using entropy, see Example~\ref{example:NCline}.

On the other hand, the dimension of a projective variety can be recovered in terms of the derived category of coherent sheaves in various ways (e.~g., see~\cite{elagin2021three} for   three such approaches). Can one define the dimension of a noncommutative variety in terms of the derived category of noncommutative coherent sheaves?

Suppose that the algebra $A$ is graded coherent. 
If $\mathsf{gr}(A)$ and $\mathsf{tors}( A)$ denote the categories of $\mathbb{Z}$-graded  finitely presented right modules and finite-dimensional such modules, one can consider the quotient category $\mathsf{qgr}(A):= \mathsf{gr}(A)/ \mathsf{tors} (A)$ 
as the category of coherent sheaves on the noncommutative variety. 
Following the approach of M.~Artin and Zhang \cite{Artin1994NoncommutativeSchemes} (based partially on the work by Verevkin~\cite{verevkin1992noncommutative} and earlier ideas of Manin), one can associate to the noncommutative variety the triple $(\qgr A, \mathcal{O}, s)$, where $\qgr A $ is  the abelian category of ``coherent sheaves'',  $\mathcal{O}$ is the ``structure sheaf''
 (the image of the right module $A_A$ in $\qgr A$), and  $s$ is the ``polarization'' (the autoequivalence of $\qgr A$ induced by the shift of grading 
$M \mapsto M[1]$ of graded modules). Under mild assumptions (which holds for many Noetherian and coherent regular algebras), one can recover the algebra $A$ from the triple~\cite[Sec.~4]{Artin1994NoncommutativeSchemes}, \cite{Polishchuk2005NoncommutativeAlgebras}. 
 However, generally the category $\qgr A$ and the whole triple contain less information than the algebra $A$.  

How much information about a noncommutative variety is retained by the bounded derived category of coherent sheaves $\mathbf{D}^b(\mathsf{qgr}(A))$?
Let 
$S$ denotes the Serre twist
on $\mathbf{D}^b (\mathsf{qgr}(A))$, that is, the endofunctor induced by the degree shift $s$.\footnote{The Serre twist functor should not be confused with either the Serre functor in the sense of Bondal and Kapranov or the translation functor of the triangulated category.}  
In important cases, the triangulated category $\mathbf{D}^b (\mathsf{qgr}(A))$ admits a classical generator. Then the complexity of 
the endofunctor $S$ is measured by its categorical entropy $\mathsf{h}_t (S)$~\cite{Dimitrov2014DynamicalCategories} 
and polynomial entropy $\hpol_t(S) $~\cite{fan2021categorical}.
Generally, these entropies are functions of a real variable $t$, but we see that in important cases they are constant  
equal to the growth entropy and the Gelfand--Kirillov dimension of the algebra.
Thus, 
these two types of entropy can be regarded as dimension-like invariants of the variety insofar as the dimension is reflected by the derived category.

In the classical projective geometry setting, the connection between asymptotic invariants of a commutative algebra and the corresponding derived invariant of a smooth  projective variety follows from the results of~\cite{fan2021categorical}: 

\begin{proposition}[Proposition~\ref{prop: smooth_proj}]
\label{prop: intro: smooth_proj}
Let $X$ be a smooth projective variety with the coordinate ring $A$ (so that $\dim X = \gkd A - 1$). Then 
$
\mathsf{h}_t (S) = 0 =  \mathsf{h} (A) 
$
and
$$
\hpol_t(S) = \gkd A-1. 
$$
\end{proposition}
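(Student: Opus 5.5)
Since $X$ is projective, $A$ is a finitely generated commutative graded algebra, and Serre's theorem identifies $\qgr A$ with $\mathrm{coh}(X)$, taking $\mathcal{O}$ to $\mathcal{O}_X$ and the degree shift $s$ to tensoring with the very ample line bundle $L=\mathcal{O}_X(1)$. So $\mathbf{D}^b(\qgr A)\simeq \mathbf{D}^b(X)$ and $S$ becomes $-\otimes L$. Because $X$ is smooth, $\mathbf{D}^b(X)$ has a classical generator, so $\mathsf{h}_t(S)$ and $\hpol_t(S)$ are defined. I will then use the results of~\cite{fan2021categorical} on twists by line bundles. Alternatively, I would reprove the needed estimates directly using a generator $G=\bigoplus_{i=0}^{\dim X}\mathcal{O}(-i)$, which is a classical generator by Beilinson/Orlov-type resolutions.

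\textbf{Categorical entropy.} For a smooth proper category one has $\mathsf{h}_t(F)=\lim_n \frac1n\log \sum_k \dim \Hom(G,F^nG[k])e^{-kt}$. With $F=S^n$, the relevant groups are $H^k(X,\mathcal{O}(n+i-j))$ for $0\le i,j\le \dim X$. By Serre vanishing, for $n\gg0$ only $k=0$ survives, and $\dim H^0(X,\mathcal{O}(m))$ is the Hilbert polynomial $P(m)$, of degree $\dim X$ in $m$. The sum therefore grows polynomially in $n$ for every $t$, so $\mathsf{h}_t(S)=0$. On the algebra side, $\dim A_n=P(n)$ for $n\gg0$, so $\mathsf{h}(A)=\limsup \frac{\log P(n)}{n}=0$.

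\textbf{Polynomial entropy.} By definition, $\hpol_t(S)=\limsup_n \frac{\log(\delta_t(G,S^nG)-e^{n\mathsf{h}_t})}{\log n}$, which here is $\limsup \frac{\log \delta_t(G,S^nG)}{\log n}$ since $\mathsf{h}_t=0$ and the correction term is constant.

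For the \emph{upper bound} I use the Hom-dimension formula, valid in the smooth proper setting up to constants, as in~\cite{fan2021categorical}: $\delta_t(G,S^nG)$ is comparable to $\sum_k\dim\Hom(G,S^nG[k])e^{-kt}$, which is $\asymp n^{\dim X}$.

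For the \emph{lower bound}, if the Hom formula is only an inequality in one direction, I would use $\delta_t(G,E)\ge C\cdot \sum_k \dim\Hom(G,E[k])e^{-kt}$, obtained by applying $\Hom(G,-)$ to an iterated cone decomposition, and $\delta_t(G,E)\le$ the analogous sum with $G$ replaced by its dual generator. Both bounds come from the two-sided estimate in~\cite{fan2021categorical}. This yields $\hpol_t(S)=\dim X=\gkd A-1$. The equality $\dim X=\gkd A-1$ is the classical fact that the Hilbert polynomial has degree $\dim X$.

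\textbf{Main obstacle.} The main obstacle is the two-sided comparison between the complexity $\delta_t$ and the Hom dimensions. For the lower bound we must also check that the $e^{-kt}$ weights do not matter. This holds because, for large $n$, only $k=0$ contributes, and the finitely many small $n$ are irrelevant to the limit. I would cite Fan--Fu--Ouchi's computation for $-\otimes L$ with $L$ ample rather than reprove it.
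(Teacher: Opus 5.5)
Your proposal is correct. The route you say you would ultimately take is the paper's: identify $S$ with $-\otimes\mathcal{O}_X(1)$ and cite Fan--Fu--Ouchi for line-bundle twists. The paper's proof cites \cite[Lemma~6.5]{fan2021categorical} ($\hpol_t(-\otimes L)=\dim X$ for $L$ nef and big) and \cite[Lemma~2.13]{Dimitrov2014DynamicalCategories} ($\mathsf{h}_t(-\otimes L)=0$), and uses Serre's equivalence $\qgr A\simeq\mathrm{coh}(X)$ without comment.

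Your self-contained alternative is a genuinely different route. You combine the two-sided comparison of $\delta_t$ and $\epsilon_t$ in the saturated category $\mathbf{D}^b(X)$ (Theorem~\ref{th: smooth_quoted}(b)) with Serre vanishing, which gives $\epsilon_t(G,S^nG)=\sum_{i,j}P(n+i-j)$ for $n\gg0$, independently of $t$. This is the mechanism of the paper's proof of Corollary~\ref{cor:reg}, with asymptotic Serre vanishing in place of Minamoto's exact vanishing of $\ext^m_{\qgr A}(\mathcal{O},S^n\mathcal{O})$ for $m\ne 0$. It gives $\mathsf{h}_t(S)=0$ and $\hpol_t(S)=\deg P=\gkd A-1$ at once, and shows that both invariants are read off the Hilbert function of $A$. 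The commutative and regular cases then become a single argument. The citation route is shorter and covers arbitrary nef and big $L$, where Serre vanishing is not available, but it leaves the link with $\gkd A$ implicit.

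Two small corrections. First, the definition of $\hpol_t$ is $\limsup\bigl(\log\delta_t-n\mathsf{h}_t\bigr)/\log n$, not $\limsup \log(\delta_t-e^{n\mathsf{h}_t})/\log n$; this is harmless here since $\mathsf{h}_t=0$. Second, no dual generator is needed. The bound $\epsilon_t(G,E)\le\epsilon_t(G,G)\,\delta_t(G,E)$ uses only properness, to make the constant finite. Smoothness, by resolving the diagonal through $G^\vee\boxtimes G$, gives $\delta_t(G,E)\le C(t)\,\epsilon_t(G,E)$ with the same $G$.
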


For a widely known class of noncommutative smooth algebras (that is, the class of regular algebras) we obtain an analogous result. It holds for both categorical and polynomial entropy.

\begin{theorem}[Corollary~\ref{cor:reg}]
\label{th: intro: AS-reg}
Let $A$ be  connected regular algebra. If $A$ is graded coherent, then 
the category $\mathbf{D}^b(\mathsf{qgr}(A))$ admits a split generator and 
$$
\mathsf{h}_t (S) =  \mathsf{h} (A). 
$$
If $A$ has polynomial growth (that is, $A$ is Artin--Schelter regular), then
$$
\hpol_t(S) = \gkd A-1. 
$$
\end{theorem}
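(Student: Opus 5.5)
We will produce a classical generator of $\mathbf{D}^b(\qgr A)$ and express the complexities $\delta_t(G, S^n G)$ through the Hilbert series of $A$. Let $d = \gd A < \infty$ and let $\pi:\mathsf{gr}(A)\to\qgr A$ be the quotient functor. Since $A$ is connected and regular, every finitely presented graded module $M$ has a finite resolution by finitely generated free modules $\bigoplus A[-j]$. Hence $\pi M$ lies in the thick subcategory generated by the objects $\mathcal{O}(j)=\pi A[j]$. We then use the Koszul-type exact sequence for the trivial module $k$, namely the minimal free resolution $0\to P_d\to\dots\to P_0=A\to k\to 0$ with $P_i$ generated in degrees $\ge i$, together with $\pi k = 0$. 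This exhibits each $\mathcal{O}(j)$ in terms of finitely many neighbouring twists. So $G=\bigoplus_{j=0}^{m}\mathcal{O}(-j)$ is a split generator, where $m$ is the top degree of generators of the resolution. (In the non-Noetherian coherent case the resolution of $k$ is still finite and finitely generated, because $A$ is coherent and $\gd A<\infty$.)

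\emph{Upper bound.} Write $\delta_t(G,S^nG)$ for the complexity. The twist $S^nG=\bigoplus_j\mathcal{O}(n-j)$. Iterating the triangles coming from the resolution of $k$ expresses $\mathcal{O}(n)$ through $G$ with multiplicities governed by the coefficients of the inverse Hilbert series $1/H_A(z)=\sum_i(-1)^i H_{P_i}(z)$. More precisely, the number of cones needed is bounded by $\dim A_{n}$ up to polynomial factors and shifts of at most $d$ in homological degree. So $\delta_t(G,S^nG)\le C\,(\dim A_{n+m}+\dots)e^{d|t|}$. Taking $\limsup \frac1n\log$ gives $\mathsf{h}_t(S)\le \mathsf{h}(A)$, and taking $\limsup \frac{\log}{\log n}$ gives $\hpol_t(S)\le \gkd A-1$. (Alternatively, I would invoke the general upper bound for finite global dimension, if the paper establishes it earlier.)

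\emph{Lower bound.} Use the standard inequality $\delta_t(G,F)\ge \sum_i \dim\Hom(G',F[i])e^{-it}/C$ for a fixed object $G'$ (Dimitrov--Haiden--Katzarkov--Kontsevich). Take $G'=\mathcal{O}$. For regular algebras, Artin--Zhang $\chi$-condition / local duality gives $\Hom_{\qgr}(\mathcal{O},\mathcal{O}(n))=A_n$ for $n\gg0$ (the torsion part $H^0_\mathfrak m$, $H^1_\mathfrak m$ of $A$ vanishes since $A$ is Gorenstein of depth $d\ge2$; the case $d\le 1$ is trivial). Thus $\delta_t(G,S^nG)\ge c\dim A_n$ for all $t$, which gives $\mathsf{h}_t(S)\ge\mathsf{h}(A)$. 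In the AS-regular (polynomial growth) case, $\dim A_n\sim c n^{\gkd A-1}$, so $\hpol_t(S)\ge\gkd A-1$. The polynomial-entropy bookkeeping needs $\mathsf{h}_t(S)=0$ first, which holds because $\mathsf{h}(A)=0$.

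The main obstacle is this lower bound in the merely coherent (non-Noetherian) case. There one must verify that $\Hom_{\qgr}(\mathcal{O},\mathcal{O}(n))$ still contains $A_n$, that is, that $\Hom_{\mathsf{gr}}(\mathfrak m^{\ge k},A[n])$ stabilizes to $A_n$. I would compute $\varinjlim\ext^i(A/A_{\ge k},A)$ via the finite resolution of $k$ and Gorenstein duality $\ext^i(k,A)=0$ for $i<d$, which handles $i=0,1$ when $d\ge2$.
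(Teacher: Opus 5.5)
\paragraph{Upper bound: the direct sketch fails.} Iterating the triangles obtained by applying $\pi$ to the shifted resolution of $k$ only ever adds cones, so nothing realizes the alternating signs of $1/H_A(z)$. The number of twisted copies of $\mathcal{O}$ you produce satisfies $c_n=\sum_{i\ge1}\sum_j b^{\kkk}_{ij}c_{n-j}$. For $A=k[x,y]$ this is $c_n=2c_{n-1}+c_{n-2}$, which grows like $(1+\sqrt2)^n$, even though $\entropy(A)=0$ and $\mathcal{O}(n)$ on $\PP^1$ has linear complexity.

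So your fallback is not optional. The upper bound has to come from the Bondal--Van den Bergh argument of Theorem~\ref{th:main_finite_gd}: it resolves $A_{\ge n}[n]$ in one go and controls its Betti numbers by Lemma~\ref{lem:betti_eval}. Corollary~\ref{cor:main_finite_dim} does not depend on Section~\ref{sec: smooth}, and it applies because regular algebras have finite global dimension.

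\paragraph{Two further points to make explicit.} First, the constant in your lower bound comes from $\epsilon_t(\mathcal{O},E)\le\epsilon_t(\mathcal{O},G)\,\delta_t(G,E)$, the inequality that holds with no smoothness assumption. So you need $\bigoplus_i\ext^i(\mathcal{O},\mathcal{O}(-j))$ to be finite-dimensional for $0\le j\le m$. Your Gorenstein computation gives this. Filtering $A/A_{\ge k}$ by shifted copies of $k$ shows two things: the local cohomology of $A$ vanishes outside cohomological degree $d$, and the transition maps on $\ext^d_A(A/A_{\ge k},A)$ are injective with cokernels $\ext^d_A(k,A)^{\dim A_k}$ (suitably shifted). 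Hence the top local cohomology is locally finite. Alternatively, properness follows from Minamoto's equivalence with $\mathbf{D}^b(\mathsf{mod}\text{-}\nabla A)$.

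Second, generating the negative twists from $G$ uses the Gorenstein symmetry $P_d\cong A(-\ell)$ of the resolution of $k$. Your phrase ``neighbouring twists'' hides this.

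\paragraph{Comparison with the paper.} With these repairs your argument is correct, and it takes a genuinely different route. The paper gets the equality in one stroke:
\begin{itemize}
\item Proposition~\ref{prop:AS-reg_smooth} (Minamoto's equivalence plus the smoothness theorem of \cite{elagin2020smoothness}) makes $\mathbf{D}^b(\qgr A)$ saturated.
\item Theorem~\ref{th: smooth_quoted}(b) then identifies both entropies with those of the orbit algebra.
\item Minamoto's computation $\RHom(\mathcal{O},S^n\mathcal{O})=A_n$ identifies the orbit algebra with $A$.
\end{itemize}
You sandwich instead. Your upper bound uses only coherence and finite global dimension. Your lower bound uses only properness, $\dim\Hom_{\qgr}(\mathcal{O},\mathcal{O}(n))\ge\dim A_n$, and positivity of the remaining terms of $\epsilon_t$.

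What this buys you: you need neither a smooth enhancement nor the vanishing of $\ext^m(\mathcal{O},S^n\mathcal{O})$ for $m\ne0$, and it becomes visible which hypothesis drives which inequality. The price is that your upper bound inherits the hypothesis $A_0=\kkk$ of Section~\ref{sec: inec}. The saturation argument, by contrast, applies verbatim to regular algebras with separable $A_0/\rad A_0$, such as the preprojective algebras of Corollary~\ref{cor:prepr}.
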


If the ground field $k$ is perfect, the conclusion of Theorem~\ref{th: intro: AS-reg} holds for non-connected graded coherent  regular algebras as well, see Corollary~\ref{cor:reg}.  

For example, let $Q$ be a finite acyclic quiver of infinite representation type. Then its preprojective algebra $\Pi_kQ $ is regular and graded coherent  with respect to certain locally finite grading.  Thus, the algebra  $\Pi_kQ $ satisfies  the conclusion of Theorem~\ref{th: intro: AS-reg}, see Corollary~\ref{cor:prepr}.

The proofs of the above equalities are based on the smoothness of the corresponding commutative and noncommutative varieties.
Nevertheless, although a monomial algebra of exponential growth does not in any sense define a smooth variety, its entropy also satisfies an analogous inequality.


\begin{theorem}[\cite{lu2023entropy}]
Suppose that $A$ is a quotient algebra of a path algebra of a finite quiver by a finite set of monomial relations. Then $A$ is coherent, the category  $\mathbf{D}^b (\mathsf{qgr}(A))$ admits a classical generator, and the categorical entropy of the Serre twist equals to the growth entropy of the algebra:
$$
\mathsf{h}_t (S) = \mathsf{h} (A) . 
$$
\end{theorem}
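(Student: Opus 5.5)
The plan has three stages: establish coherence, produce a classical generator, and compute the entropy of $S$ from dimensions of Hom spaces.

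\textbf{Coherence.} Let $A = kQ/I$ with $I$ generated by finitely many paths, and let $N$ bound the length of the relations. Each arrow $a$ has right annihilator $\mathrm{ann}_r(a)$ generated by a finite set of paths, since only relations of length at most $N$ matter. More generally, a finitely generated graded right ideal has a syzygy module generated by monomial data of bounded length. Hence finitely generated submodules of finitely presented modules are finitely presented, which is the coherence argument known for monomial algebras.

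\textbf{A classical generator.} By truncating high degrees, every object of $\qgr A$ is, modulo torsion, a module generated in degrees $\ge n_0$. Its syzygies become ``linear'' after about $N$ steps: each tail module $\pi(pA_{\ge m})$, for a path $p$, has a finite resolution in $\qgr A$. This holds because in the Anick/Green resolution of a monomial algebra the chains eventually become eventually periodic modulo torsion. I would show that $\mathbf{D}^b(\qgr A)$ is generated by the finitely many objects $\pi(e_iA)[j]$ with $0\le j<N$, or by the tails $\pi(pA)$ with $|p|<N$. A key fact is that $\pi(pA)\cong\pi(p'A)[\ell]$ whenever $p$ and $p'$ have the same ``last $N-1$ letters'' data, so the set of isomorphism types up to shift is finite. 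Call this generator $G$.

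\textbf{Computing the entropy.} I would use the Dimitrov--Haiden--Katzarkov--Kontsevich formula, which gives $\entropy_t(S)$ as
$$\lim_n \frac1n \log \sum_k \dim \Hom(G, S^n G[k])\, e^{-kt},$$
valid when the category is saturated enough, or else a direct complexity estimate. The term $\Hom_{\qgr}(\pi(e_iA),\pi(e_jA)[n])$ equals $\varinjlim_m \Hom_{\mathsf{gr}}(e_iA_{\ge m}, e_jA[n])$. For monomial algebras this is essentially $e_jA_n e_i$ restricted to paths extendable to infinity, which has the same exponential growth as $\dim A_n$. The higher Ext groups are controlled similarly and grow at most at rate $\entropy(A)$. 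This gives the upper bound on $\entropy_t(S)$, and the lower bound from the degree-$0$ Hom.

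\textbf{Main obstacle.} The hardest step is the comparison of $\varinjlim\Hom$ with $\dim A_n$, together with bounding the $\ext^{k}$ contributions uniformly in $t$. I would first reduce to the Ufnarovski graph of $A$: its vertices are the nonzero paths of length $N-1$, and its edges correspond to the allowed extensions. Both $\dim A_n$ and the Hom dimensions are then counts of walks in this finite graph, and their growth rate is $\log$ of its spectral radius. Here the zero-dimensional torsion parts (the non-recurrent components) do not change the exponential rate.
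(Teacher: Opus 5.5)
\textbf{There is a genuine gap.} The decisive step of your plan is computing $\entropy_t(S)$ from $\dim\Hom(G,S^nG[k])$, and it fails. For monomial algebras, $\mathbf{D}^b(\qgr A)$ is not Hom-finite, hence not saturated. So neither the ext-distance formula nor a lower bound by Hom dimensions is available. Already for the free algebra $A=k\langle x,y\rangle$ one has $A_{\ge m}\cong A(-m)^{2^m}$, so
\[
\Hom_{\qgr}(\mathcal O,\mathcal O)=\varinjlim_m \Hom_{\mathsf{gr}}(A_{\ge m},A)\cong\varinjlim_m M_{2^m}(k),
\]
with injective transition maps. This is an infinite-dimensional ultramatricial algebra. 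Likewise every $\Hom_{\qgr}(\mathcal O,S^n\mathcal O)$ is infinite-dimensional. Hence $\epsilon_t(G,S^nG)=\infty$ for all $n$, and the bound $\delta_t\le C_1(t)\epsilon_t$ is vacuous.

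Your identification of $\Hom_{\qgr}(\pi(e_iA),\pi(e_jA)(n))$ with a piece of $e_jA_ne_i$ comes from the Noetherian ($\chi$-condition) case and is false here, because $\dim\Hom_{\mathsf{gr}}(e_iA_{\ge m},e_jA(n))$ grows with $m$.

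Your generator stage has a related gap. Anick/Green resolutions of monomial modules are typically infinite, and being eventually periodic does not make them finite. For $A=k\langle x,y\rangle/(x^2)$, the module $A/xA$ has the periodic resolution $\cdots\to A(-2)\xrightarrow{x}A(-1)\xrightarrow{x}A\to A/xA\to 0$. What actually happens in $\qgr A$ is that $0\to\pi(xA)\to\pi A\to\pi(A/xA)\to 0$ splits, since $A_{\ge1}=xA\oplus yA$ and $\pi(A/xA)\cong\pi(yA)$. Also, finiteness of the isomorphism types of $\pi(pA)$ up to shift says nothing about arbitrary, non-monomial, finitely presented modules.

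\textbf{The missing idea} is exactly this splitting phenomenon; it is the route of \cite{lu2023entropy} that the last section of the paper relies on. The Holdaway--Smith map $A\to kQ_A$ to the path algebra of the Ufnarovski graph gives $\qgr A\simeq\qgr kQ_A$, compatibly with $\mathcal O$ and $S$. Moreover, $\qgr kQ_A$ is semisimple: by Smith's theorem it is equivalent to finitely presented modules over an ultramatricial, hence von Neumann regular, algebra. So every object is a direct summand of some $\mathcal O^{\oplus s}$. This gives at once that $\mathcal O$ is a classical generator and that $\delta_t(\mathcal O,\bar X)=\rk_{\bar A}(\bar X)$, independent of $t$.

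The entropy is then a rank count, not a Hom count, with bounds in both directions:
\begin{itemize}
\item An epimorphism $\bar A^{\oplus s}\to\bar A(n)$ lifts to a graded map with finite-dimensional cokernel. This forces $\dim A_{k+n}\le s\dim A_k$ for $k\gg0$, hence $s\ge e^{n\entropy(A)}$.
\item Conversely, from $A_{\ge m}\cong\bigoplus_i P_i^{b_i^m}(-m)$ one builds epimorphisms $\bar A^{\oplus C}\to\bar A(n)$ with $C$ of order $\max_i b_i^{m+n}/b_i^m$ for $m\gg0$, by the same path-count comparison used in the paper's proof for the polynomial-growth case.
\end{itemize}
So the Ufnarovski graph is the right combinatorial object, but you must use it for the category equivalence and for ranks, not to count $\Hom$ dimensions.
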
 

For algebras of polynomial growth the growth entropy is zero, so that in the case of monomial algebras of polynomial growth the above theorem gives the trivial equality $0=0$. One may expect that in this case  the polynomial categorical entropy  is equal to the Gelfand--Kirillov dimension of the algebra. Surprisingly, we have the following result. 

\begin{theorem}
Suppose that $A$ is a quotient algebra of a path algebra of a finite quiver by a finite set of monomial relations. If $A$ has polynomial growth, then the polynomial categorical entropy is zero, 
$$
\hpol_t(S)  = 0. 
$$
\end{theorem}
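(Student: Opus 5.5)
The plan is to reduce to a path algebra of a finite quiver, and then show that every Serre twist of the generator is a direct summand of a uniformly bounded number of copies of the generator.

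\emph{Reduction.} By the result of Holdaway and Smith (also used in \cite{lu2023entropy} for the case of exponential growth), if $A=kQ/I$ with $I$ generated by finitely many monomials, then $\qgr A$ is equivalent to $\qgr kQ'$ for the path algebra of a finite quiver $Q'$, namely the Ufnarovski graph of $A$. This equivalence commutes with the degree shift, so the Serre twists $S$ correspond. Moreover $A$ and $kQ'$ have the same asymptotic growth of $\dim A_n$, so $kQ'$ also has polynomial growth. It therefore suffices to treat $A=kQ$. We may also delete every vertex from which only finitely many paths start, since the corresponding projective $e_vA$ is torsion up to finitely many components and vanishes in $\qgr$. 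Polynomial growth of $kQ$ means that no two distinct oriented cycles of $Q$ share a vertex.

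\emph{Structure of $\qgr kQ$.} For each vertex $v$ we have $e_vA_{\ge 1}=\bigoplus_{a:v\to w} e_wA[-1]$, and $e_vA/e_vA_{\ge1}$ is torsion. Hence in $\qgr A$
$$\mathcal O_v\cong\bigoplus_{a:\,v\to w}\mathcal O_w(-1).$$
The category $\qgr kQ$ is known to be semisimple-like: it is equivalent to finitely generated projectives over an ultramatricial (von Neumann regular) algebra. Hence $\mathbf D^b(\qgr A)$ has split generator $G=\bigoplus_v\mathcal O_v$. Every object is a finite sum of shifts of direct summands of the $\mathcal O_v(m)$, concentrated in cohomological degree $0$ up to the translation functor, so no $t$-dependent weights arise when the shifts are of degree zero.

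\emph{Bounding complexity.} From the displayed isomorphism, $\mathcal O_w(-1)$ is a direct summand of $\mathcal O_v$ for every arrow $v\to w$; equivalently, $\mathcal O_v(1)$ is a direct summand of $\mathcal O_u$ whenever there is an arrow $u\to v$. Iterating, $S^n\mathcal O_v=\mathcal O_v(n)$ is a direct summand of $\mathcal O_u$ whenever there is a path of length $n$ from $u$ to $v$. In the polynomial growth case every surviving vertex $v$ lies on, or downstream of, some cycle $C$. Choose $u_n\in C$ so that a path of length exactly $n$ from $u_n$ to $v$ exists; this is possible for all $n\ge n_0$ by going around $C$. Then $S^nG$ is a direct summand of $G^{\oplus N}$, where $N$ is the number of vertices, independent of $n$. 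Hence the complexity satisfies $\delta_t(G,S^nG)\le N$ for all $t$ and all large $n$. Consequently $\log\delta_t(G,S^nG)$ is bounded, and
$$\hpol_t(S)=\varlimsup_{n\to\infty}\frac{\log\delta_t(G,S^nG)}{\log n}=0.$$
Nonnegativity of $\hpol_t$ gives equality.

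\emph{Main obstacle.} The delicate step is the choice of direction. One must check which of $S^{n}$ and $S^{-n}$ enters the definition of entropy used in the paper, and which way the summand relation goes. Expressing $\mathcal O_v(-n)$ in terms of $G$ instead would require a sum over all paths of length $n$ starting at $v$. That count grows like a polynomial of degree $\gkd A-1$ and would give a nonzero answer. So I would first verify, from the definition in \cite{fan2021categorical} and the isomorphism above, that $S^n G$ really lies in the ``cheap'' direction, that is, as a summand of a single copy of some $\mathcal O_u$. If the convention is the opposite one, I would instead run the same argument using paths of length $n$ ending at $v$ and the right-module version of the decomposition. I would also double-check that the Holdaway--Smith equivalence intertwines the twists with the same sign.
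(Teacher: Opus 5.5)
There is a genuine gap, at the step you flagged as delicate. Your isomorphism $\mathcal O_v\cong\bigoplus_{a:v\to w}\mathcal O_w(-1)$ is correct. But for an arrow $u\to v$ it makes $\mathcal O_v(-1)$ a direct summand of $\mathcal O_u$, not $\mathcal O_v(1)$; your ``equivalently'' silently flips the sign. With the paper's convention $(sM)_i=M_{i+1}$ we have $S^n\mathcal O=\mathcal O(n)$, so your chain of summands controls $S^{-n}G$, not $S^nG$.

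The direction that enters $\hpol_t(S)$ is the expanding one. Twisting your isomorphism gives $S\mathcal O_v\cong\bigoplus_{a:v\to w}\mathcal O_w$, so $S^n\mathcal O_v$ has one summand $\mathcal O_w$ for each path of length $n$ from $v$ to $w$. A change of conventions cannot repair this: passing to left modules replaces $\qgr A$ by $\qgr A^{op}$. Your heuristic that this path count ``would give a nonzero answer'' is also mistaken, because the number of summands is not the complexity.

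Your argument never uses polynomial growth, and that already shows the step is false. For $A=k\langle x,y\rangle$ (one vertex, two loops) it would make $\mathcal O(1)\cong\mathcal O^{\oplus 2}$ a summand of $\mathcal O$. That gives $\delta_t(\mathcal O,S^n\mathcal O)\le 1$ and $\mathsf h_t(S)=0$, contradicting $\mathsf h_t(S)=\mathsf h(A)=\log 2$ from \cite{lu2023entropy}. Separately, a source vertex $z\to x$ feeding a loop at $x$ survives your pruning but is not downstream of any cycle, so your choice of $u_n$ can fail there as well.

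The missing idea is this: $S^m\mathcal O$ splits into polynomially many pieces, but so does $\mathcal O$ once you expand it to a deep enough level, and polynomial growth makes the two counts comparable. This is the paper's argument. For $B=kQ$ and every $d\ge 0$, in $\qgr B$ one has $\bar B(m)\cong\overline{B_{\ge m+d}(m)}\cong\bigoplus_i\bar P_i^{\,b_i^{m+d}}(-d)$ and $\bar B\cong\overline{B_{\ge d}}\cong\bigoplus_i\bar P_i^{\,b_i^{d}}(-d)$. Here $b_i^n$ is the number of paths of length $n$ ending at $i$.

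Each $b_i^n$ is eventually a quasi-polynomial whose branches share a common degree $D$, by Lemma~\ref{lem:pol_growth_quiver}. Hence $C_1n^D\le b_i^n\le C_2n^D$. For fixed $m$ and $d\gg m$ this gives $b_i^{m+d}\le C\,b_i^d$ with $C$ independent of $m$. So $\bar B(m)$ is a summand of $\bar B^{\oplus C}$, and $\delta_t(\mathcal O,S^m\mathcal O)\le C$.

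This is exactly where polynomial growth is needed: for exponential growth the ratio $b_i^{m+d}/b_i^d$ grows like $\lambda^m$. Your reduction to the Ufnarovski quiver, and your passage from bounded complexity to $\hpol_t(S)=0$, agree with the paper and are fine.
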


After the above considerations, 
we state the following two conjectures.

\begin{conj}
\label{conj: exp}
\label{conj: pol}
Suppose that the category $\mathbf{D}^b (\mathsf{qgr} A)$ for a graded coherent algebra $A$ admits a classical generator. 

(a) The entropy of the Serre twist functor on this category satisfies the inequality
$$
\mathsf{h}_t (S) \le  \entropy (A) . 
$$

(b) If the algebra $A$ has subexponential growth, then the polynomial entropy of this functor satisfies the inequality 
$$
\hpol_t (S) \le   \gkd A  -1. 
$$
\end{conj}

We prove these conjectures for algebras of finite global dimension.
It follows from the results of~\cite[Section~4]{Bondal2003GeneratorsGeometry}
that the category $\mathbf{D}^b (\mathsf{qgr} A)$ admits in this case a classical generator. 

\begin{theorem}[Corollary~\ref{cor:main_finite_dim}]
Suppose that a finitely generated connected graded $\kkk$-algebra $A$ is right graded coherent and has finite global dimension. Then
the entropy of the Serre twist functor on this category satisfies the inequality
$$
\mathsf{h}_t (S) \le  \entropy (A) . 
$$
Moreover, if the algebra has subexponential growth, then the polynomial entropy of this functor satisfies the inequality 
$$
\hpol_t (S) \le   \gkd A  -1. 
$$
So,  both Conjectures~\ref{conj: exp} (a) and (b)
hold for $A$.
\end{theorem}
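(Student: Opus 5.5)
We take as classical generator the image $G=\pi A$ of the free module (or $\bigoplus_{i=0}^{d}\pi A[-i]$ if needed), where $\pi:\mathsf{gr}(A)\to\mathsf{qgr}(A)$ is the quotient functor. We then bound the complexity $\delta_t(G,S^nG)$ from above, using the definitions of $\mathsf{h}_t$ and $\hpol_t$ from~\cite{Dimitrov2014DynamicalCategories,fan2021categorical}. The argument has three steps.

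\textbf{Step 1: a generator.} Since $A$ is connected, right graded coherent and $\gd A=d<\infty$, every finitely presented graded module $M$ has a finite resolution by finitely generated graded free modules $\bigoplus A[-j]$, of length at most $d$. Applying $\pi$, every object of $\mathbf{D}^b(\mathsf{qgr} A)$ is built from finitely many shifts $\pi A[j]$ (cf.~\cite[Section~4]{Bondal2003GeneratorsGeometry}). To get a single classical generator we need finitely many twists. I would use the graded Koszul-type resolution of the trivial module $k$: it is finite, its terms are sums of $A[-j]$ with $j\le D$ for some $D$, and $\pi k=0$. This exact complex expresses $\pi A[-m]$ in terms of $\pi A[-m+j]$ for $1\le j\le D$. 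By induction in both directions, $G=\bigoplus_{j=0}^{D-1}\pi A[-j]$ then generates.

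\textbf{Step 2: bounding the complexity.} We need to estimate $\delta_t(G,S^nG)$, which amounts to the complexity of $\pi A[n]$ relative to $G$. Write $A[n]$ via a truncation: there is an exact sequence $0\to A_{\ge n}[n]\to A[n]\to A[n]/A_{\ge n}[n]\to 0$ whose last term is finite-dimensional and hence torsion. Therefore $\pi A[n]\cong \pi(A_{\ge n}[n])$. The module $A_{\ge n}[n]$ is generated in degree $0$ by $\dim A_n$ generators, so there is an epimorphism $A^{\dim A_n}\to A_{\ge n}[n]$. Resolving it with the finite global dimension, the $i$-th syzygy is generated in degrees bounded by $iD'$, with number of generators at most $C\cdot \dim A_n$ times a polynomial factor. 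This bound comes from the Hilbert series identity $h_A(z)\,P(z)=1$, where $P$ is the polynomial giving the Betti numbers of $k$, together with the observation that the minimal resolution of $A_{\ge n}$ is controlled by $\mathrm{Tor}(A_{\ge n},k)$. Since $\mathrm{Tor}_i(A_{\ge n},k)\cong \mathrm{Tor}_{i+1}(A/A_{\ge n},k)$ and $A/A_{\ge n}$ is filtered by at most $\sum_{m<n}\dim A_m$ copies of shifted $k$, we get
$$\dim \mathrm{Tor}_i(A_{\ge n},k)\le \Bigl(\sum_{m\le n}\dim A_m\Bigr)\cdot \dim\mathrm{Tor}_{i+1}(k,k).$$
The generator degrees stay within a window $[n,n+(d+1)D']$ after the shift, that is, within $[0,(d+1)D']$. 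This realizes $\pi A[n]$ as an iterated cone, of length at most $d+1$, of sums of objects $\pi A[-j]$ with $0\le j\le (d+1)D'$. Each such object is in turn built from $G$ in a bounded number of steps by Step 1. The outcome is
$$\delta_t(G,S^nG)\le C\,e^{|t|c}\sum_{m\le n}\dim A_m .$$

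\textbf{Step 3: conclusion.} Taking $\frac1n\log$ and $\limsup$ gives $\mathsf{h}_t(S)\le \entropy(A)$, because the growth rate of partial sums equals $\varlimsup\frac{\log\dim A_n}{n}$. For $\hpol_t$ we subtract $n\mathsf{h}_t(S)=0$, using that subexponential growth forces $\mathsf{h}_t(S)\le 0$. We then divide by $\log n$, which yields $\gkd A$. To get the sharper $\gkd A-1$ we should use $\dim A_n$ rather than the partial sums. So in Step 2 I would instead resolve $\pi A_{\ge n}[n]$ directly via the surjection from $A^{\dim A_n}$, and bound the kernel's generators using the relations in degrees $\le D$ only, so that every count is linear in $\dim A_{n+j}$ for $j\le$ const. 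Then $\dim A_{n+j}$ is controlled by $\dim A_n$ up to polynomial factors.

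\textbf{Main obstacle.} The main obstacle is exactly this last refinement: bounding the Betti numbers of $A_{\ge n}$ by $O(\max_{j\le c}\dim A_{n+j})$ rather than by the cumulative sum. I would attack it by tensoring the finite free resolution of $k$ with $A_{\ge n}$, using that the degree-$n$ truncation commutes with it in high degrees. Failing that, I would settle for $\varlimsup \log(\dim A_{n+c})/\log n$ and use the relation $\gkd A=\varlimsup\frac{\log\dim A_n}{\log n}+1$, valid in the paper's convention.
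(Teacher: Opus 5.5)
Your route is the paper's: pass from $S^n\mathcal O$ to $\pi(A_{\ge n}[n])$, resolve it, observe that the generators of the resolution lie in an $n$-independent window of degrees, and count. Resolving $A_{\ge n}[n]$ directly, rather than truncating via \cite[Lemma~4.2.4]{Bondal2003GeneratorsGeometry}, is legitimate because $\gd A<\infty$. Your filtration estimate $\dim\tor_i(A_{\ge n},k)\le(\sum_{m<n}\dim A_m)\dim\tor_{i+1}(k,k)$ does give $\mathsf h_t(S)\le\entropy(A)$.

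The second inequality, however, is not proved. Cumulative sums only yield $\hpol_t(S)\le\gkd A$. The step you call the main obstacle is exactly the content of Lemma~\ref{lem:betti_eval}, and you leave it undone. You already have both ingredients:
\begin{enumerate}
\item Your filtration argument shows that $\tor_i(M,k)$, with $M=A_{\ge n}[n]$, is concentrated in degrees $[0,c]$, where $c$ does not depend on $n$.
\item Compute $\tor_i(M,k)$ as the homology of $M\otimes_A\mathbf F=M\otimes_k X_\bullet$. Here $\mathbf F=A\otimes_k X_\bullet$ is the minimal free resolution of the \emph{left} module ${}_Ak$, with $X_i$ in degrees $\ge 0$ and $\dim X_i=\dim\tor_i(k,k)$.
\end{enumerate}
In each internal degree, homology is a subquotient of chains, so
\[
\dim\tor_i(M,k)\le\sum_{j=0}^{c}\dim(M\otimes_k X_i)_j\le\dim X_i\cdot\dim M_{\le c}=\dim\tor_i(k,k)\,(\dim A_n+\dots+\dim A_{n+c}).
\]
No comparison of $\dim A_{n+j}$ with $\dim A_n$ is needed, and one ``up to polynomial factors'' would actually spoil the exponent. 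The sum is at most $(c+1)\max_{0\le j\le c}\dim A_{n+j}$ and $\log(n+j)/\log n\to 1$, so
\[
\limsup_n\frac{\log(\dim A_n+\dots+\dim A_{n+c})}{\log n}=\limsup_n\frac{\log\dim A_n}{\log n}=\gkd A-1 .
\]

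Two smaller points.

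First, in Step~1 the resolution of $k$ together with $\pi k=0$ only shows that $\pi A[s]$ lies in the thick subcategory generated by $\pi A[s-1],\dots,\pi A[s-D]$. That reaches the twists $S^n\mathcal O$ with $n>0$. The opposite direction is what you invoke in Step~2 to reach $\pi A[-j]$ with $j\ge D$. It would require solving the resolution for its top-degree term, which finite global dimension alone does not justify. You do not need it: take the window of $G$ to be $[0,c]$, so that every term of your resolution is a direct summand of a power of $G$. Then take the fact that such a window generates $\mathbf D^b(\qgr A)$ from \cite{Bondal2003GeneratorsGeometry}, as the paper does.

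Second, $A_{\ge n}[n]$ is generated in degree $0$ by $A_n$ only when $A$ is generated in degree $1$. In general its generators spread over a bounded window of degrees, which your $\tor_{i+1}$-indexed degree bound already accounts for.
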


We still have no examples of graded coherent algebras which satisfy the strict inequality 
$
\mathsf{h}_t (S) <  \entropy (A) . 
$
On the other hand, it seems important to describe new classes of algebras for which the equalities $
\mathsf{h}_t (S) =  \mathsf{h} (A)
$ and $
\hpol_t(S) = \gkd A-1 
$
hold. 


The paper is organized as follows. In Section~\ref{sec:back}, we introduce the rest of notations and give a briefly survey  of graded algebras and of entropy in triangulated categories. Some other necessary definitions are recalled later in their places.  In Section~\ref{sec: smooth}, we prove the equalities in the ``smooth'' cases. For this purpose,  we use here the calculation of complexity via so-called $\ext$-distance~\cite{Dimitrov2014DynamicalCategories}  and the properties of the bounded derived category of finite-dimensional algebras~\cite{elagin2020smoothness}. 
In the next Section~\ref{sec: inec}, we use the results 
of~\cite{Bondal2003GeneratorsGeometry} to prove the inequalities for entropy in the case of algebras of  
finite global dimension. In final Section~\ref{sec: mon_pol}, we show that the polynomial entropy for monomial algebras of polynomial growth is zero. Our first point here is the connection of the complexity with the rank of the noncommutative sheaves~\cite{lu2023entropy}.

\subsection{Acknowledgement}

The author is grateful to Alexander Efimov, Lu Li, and Denis Lyskov for fruitful discussions.  
The paper is partially supported  by RSF grant No. 24-21-00341.

\section{Background}

\label{sec:back}

\subsection{Graded algebras and noncommutative varieties}

We consider graded finitely generated $k$-algebras of the form $A = A_0 \oplus A_1\oplus A_2\oplus \dots$ 
Such an algebra $A$ is called connected if $A_0 = k$. A connected algebra is {\em standard} if it is generated by $A_0 $ and $A_1$.   Recall that  a (graded) algebra $A$ is called (graded) right coherent if all finitely generated right-sided ideals in $A$ are finitely presented or, equivalently, all finitely presented right (graded) $A$-modules form an abelian category. We refer to right graded coherent algebras simply as graded coherent.  
All modules are right-sided. 

In addition to the algebraic entropy and the Gelfand-Kirillov dimension, we will sometimes consider little bit more general growth measures suitable for bi-graded algebras. 
Suppose that the algebra $A =\bigoplus_{n\ge 0}\bigoplus_{m\in \inte} A_{n,m}$ is bi-graded. 
Consider the Hilbert series of the variable $e^{-t}$ for the $n$-th graded component $A_n = \bigoplus_{m\in \inte} A_{n,m}$ (which is a graded module over the usual graded algebra $A_{0}$):
$$
 H_{A_n}(e^{-t}) = \sum_{m\in \inte} e^{-mt} \dim A_{n,m}.
$$  
For usual graded algebras, we assume that the second grading is trivial (i.e., $A_{n,m} =0 $ for $m\ne 0$), so that in this case
$ H_{A_n}(e^{-t}) = \dim A_{n,0}=\dim A_n$. 

  Define the algebraic entropy and the polynomial entropy  of the bigraded algebra as real functions $\mathsf \mathsf{h}_t(A), \hpol_t(A) : \mathbb{R} \rightarrow [-\infty,+\infty]$ of $t$:  
\begin{equation}
\mathsf{h}_t (A) =  \varlimsup_{n\to \infty} \frac{\log H_{A_n}(e^{-t}) }{n}
\end{equation}
and 
\begin{equation}
\label{eq:hpol_alg_pol_growth}
\hpol_t(A) = \limsup_{n \to \infty} \frac{\log H_{A_n}(e^{-t})  - n \mathsf{h}_t(A)}{\log n}. 
\end{equation}
If  the entropy $\mathsf{h}_t (A)$ is the zero function, the formula for the polynomial entropy gives 
$$
\hpol_t(A) = \limsup_{n \to \infty} \frac{\log H_{A_n}(e^{-t})}{\log n}. 
$$
For the same algebra with the total grading $A_* = \bigoplus_{n\ge 0} A_n$, its entropy is recovered from the value of teh function 
$\mathsf{h}_t (A) $ at $t=0$ as 
$$
\mathsf{h} (A_*)  = \mathsf{h}_0 (A) .
$$ Moreover, if $A$  is finitely generated and 
has subexponential growth (so that $\mathsf{h} (A_*)  = \mathsf{h}_0 (A) =0$), then we deduce from~(\ref{eq:hpol_alg_pol_growth}) that
  $$
\gkd A =   \hpol_0(A) +1. 
$$
Later we will discuss these entropies for bigraded orbit algebras of endofunctors. 

Given a graded coherent algebra $A$, let   $\mathsf{Gr}(A)$ and $\mathsf{gr}(A)$ denote the abelian 
categories of  all graded right $A$-modules and finitely presented such modules, respectively.
Denote by $\mathsf{Tors}( A)$ and $\mathsf{tors}( A)$ the full subcategories
of $\mathsf{Gr}(A)$ and $\mathsf{gr}(A)$ consisting 
of all (respectively, finite) torsion modules.
Then the quotient category $\mathsf{Qgr}(A):= \mathsf{Gr}(A)/ \mathsf{Tors} (A)$ plays the role of the of the category of quasicoherent sheaves on the noncommutative variety defined by $A$~\cite{Artin1994NoncommutativeSchemes}. Moreover, since $A$ is coherent,   
$\mathsf{gr}(A)$ is an abelian Serre subcategory, so that one can define the 
category of {\em coherent } sheaves as $\mathsf{qgr}(A):= \mathsf{gr}(A)/ \mathsf{tors} (A)$ \cite{Polishchuk2005NoncommutativeAlgebras}. 
In this paper, we consider 
the bounded derived category
$\mathbf{D}^b (\mathsf{qgr}(A))$ and the Serre twist functor $S$ on it induced by the grading shift $s$ on $\mathsf{gr}(A)$, that is, the functor $s: (sM)_i = M_{i+1}$ for a graded $A$-module $M$ and an integer $i$.

In noncommutative projective geometry, one of the most important classes of algebras is the class of regular algebras. The first version of the definition is due to Artin and Schelter~\cite{artin1987graded}.
Here we cite a more general definition from~\cite{minamoto2011structure}.

\begin{definition}
Consider a graded algebra $A = A_0 \oplus A_1 \oplus \dots$ and denote by $R = A_0$ its subalgebra of degree zero. The algebra $A$ is called regular if it has fnite global dimension (say, $d$) and 
 satisfies the
following Gorenstein property:
$$
\ext^i_A(R_A,R_A) \cong\left\{ \begin{array}{ll} 0, & i\ne d, \\
\Hom_k (R,k) [l] \mbox{ for some } l\in \inte ,& i =d .
\end{array}  \right.
$$
A regular algebra of polynomial growth is called Artin-Schelter (AS) regular.
\end{definition}

A well-known conjecture~\cite{artin1987graded} claims that all connected AS-regular
algebras are Noetherian. This conjecture holds in all known cases~\cite[Sec.~3.3]{rogalski2024artin}.

For algebras of exponential growth, A.~Bondal conjectured that all connected regular algebras are (graded) coherent.
This conjecture holds in the case $d=2$~\cite{piontkovski2008coherent} and in a number of examples with $d=3$. However, it fails for $d\ge 4$~\cite[Th.6.2.2]{gelinas2018contributions}.

\subsection{Entropy in triangulated categories}

\label{subs:category_ent_def}

The notion of the {\em entropy} for (exact) endofunctors of triangulated categories (having a split generator)
has been introduced by Dimitrov, Haiden, Katzarkov, and Kontsevich~\cite{Dimitrov2014DynamicalCategories}. In a sense, it measures the exponential growth rate of an endofunctor. 
In the case of polynomial growth, it is measured by the {\em polynomial entropy} of the endofunctor introduced later by Fan, Fu, and Ouchi in \cite{fan2021categorical}.
For some endomorphisms of projective varieties, the categorical entropy of the induced endofunctors of the derived category of coherent sheaves is connected with topological entropy, see
\cite{Dimitrov2014DynamicalCategories,Kikuta2017OnCurves,Kikuta2019OnEntropy,Yoshioka2020CategoricalSurfaces,fan2021categorical}
and others. 

For triangulated categories and thick subcategories we refer to  
\cite{Weibel2013AnAlgebra} and
\cite{Neeman2014TriangulatedCategories.AM-148}.

Let $\mathsf{C}$ be a triangulated category.  Recall that a full triangulated subcategory $\mathsf D$ is \textsf{thick} if it is closed under taking direct summands and isomorphisms. 
An object $\mathcal{G}$ is called a (classical, or split) generator of  $\mathsf C$ if the
smallest thick  triangulated subcategory of   $\mathsf C$ containing 
$\mathcal{G}$ 
is $\mathsf C$ itself.  

\begin{definition}(\cite{Dimitrov2014DynamicalCategories}, Definition 2.1)
\label{def:complexity}
Let $\mathsf{C}$ be a triangulated category with
a generator $\mathcal G$. 
Then for each object $E$ of $\mathsf{C}$  there is an object $E'$ and a tower of distinguished triangles
\begin{eqnarray}
\label{eq: tower}
\xymatrix@=.4cm{
  E_0 \ar[rr]^{} & &  E_1 \ar[rr]^{} \ar[ld]^{} & &E_2 \ar[r]^{} \ar[ld]^{}&\cdots \ar[r]^{}& E_{p-1}\ar[rr]^{} & &E_p\cong E\oplus E'  \ar[ld]^{}  \\
  &  \mathcal{G}[n_1] \ar@{-->}[lu]_{} & & \mathcal{G}[n_2]  \ar@{-->}[lu]_{}& & \cdots  & & \mathcal{G}[n_p] \ar@{-->}[lu]_{}
   }
    \end{eqnarray} 
with 
$E_0 = 0$, $p\geq 0$, and $n_i\in \mathbb{Z}$.
Let $t$ be a real number. To each tower of distinguished triangles of the form~(\ref{eq: tower})  
we associate the exponential
sum $\sum_{i=1}^p e^{n_it}$.
Let $S_t\subset \mathbb{R}$ be the set of all such sums for a given $t$. The \textsf{complexity} of $E$ with respect
to $\mathcal{G}$ is the function $\delta_t(\mathcal{G}, E):\mathbb{R}\rightarrow [0,+\infty]$
of $t$, given by $\delta_t(\mathcal{G}, E)=  \inf 
 \ S_t$.
\end{definition}

The following two functions measure the growth rate of the complexity of an endofunctor $F : \mathsf C \rightarrow \mathsf C$. 

\begin{definition}\cite[Definition~2.4]{Dimitrov2014DynamicalCategories}, \cite[Section 1.2]{fan2021categorical}
\label{def:cat_entropy}
Let $F : \mathsf C \rightarrow \mathsf C$ be an exact endofunctor of a triangulated category $\mathsf C$ with generator $\mathcal G$. 

The \textsf{entropy} and the \textsf{polynomial entropy} of $F$ are the functions $\mathsf h_t(F), \hpol_t(F) : \mathbb{R} \rightarrow [-\infty,+\infty)$ of $t$ given by
\begin{equation*}
\mathsf{h}_t( F)= \lim_{n \to \infty} \frac{1}{n} \log \delta_t(\mathcal{G}, F^n\mathcal{G}),
\end{equation*}
\begin{equation*}
\hpol_t(F) = \limsup_{n \to \infty} \frac{\log \delta_t (\mathcal{G}, F^n (\mathcal{G})) - n \mathsf{h}_t (F)}{\log n}. 
\end{equation*}
\end{definition}

The equality $\mathsf{h}_t(F)=0$ implies the subexponential growth of the complexity $ \delta_t (\mathcal{G}, F^n (\mathcal{G}))$. In this case we get simply 
\begin{equation}
\label{eq: pol_ent_pol_growth}
\hpol_t(F) = \limsup_{n \to \infty} \frac{\log \delta_t (\mathcal{G}, F^n (\mathcal{G}))}{\log n}. 
\end{equation}
In particular, the value at zero $\hpol(F):= \hpol_0(F)$ measures in this case the rate of polynomial growth of the integer-valued sequence $\delta(n)=\delta_0 (\mathcal{G}, F^n (\mathcal{G}))$.

It is shown in
~\cite[Lemma~2.5]{Dimitrov2014DynamicalCategories} and~\cite[Lemma~2.6]{fan2021categorical}
that $\mathsf{h}_t(F)$ and $\hpol_t(F) $ are well-defined, i.e., the limits exist and are independent of the choice of the generator $\mathcal G$.


\begin{definition}[\cite{fan2021categorical}]
Let $M$ and $N$ be two objects of  a triangulated category $\mathsf C$. 
The $\ext$-distance function from $M$ to $N$ is a function from $\real$ to $\real_{\ge 0} \cup \{\infty\}$ 
$$
\epsilon_t(M,N) := \delta_t(k, \RHom(M,N)) = \sum_{m\in \inte} \dim_k \ext^m (M, N) e^{-mt}.
$$ 
\end{definition}

In the smooth case, the above distance is essentially equivalent to the complexity.

\begin{theorem}[\cite{Dimitrov2014DynamicalCategories}, Theorem~2.5]
\label{th: smooth_ext_distance}
For each triangulated category $\mathsf C$ there exists a function $C_1: \real \to \real_{\ge 0}$ 
such that for all $M,N \in \Ob \mathsf C$,
$$
 \delta_t(M,N) \le C_1(t) \epsilon_t(M,N).
$$
Moreover, suppose in addition that the triangulated category $\mathsf C$ admits an $A_\infty$ enhancement which is {\em saturated}, that is,  
Morita equivalent to a smooth and compact $A_\infty$-algebra.
Then  there exists another function $C_2: \real \to \real_{\ge 0}$ 
such that for all $M,N \in \Ob \mathsf C$,
$$
C_2(t) \epsilon_t(M,N) \le
 \delta_t(M,N).
$$
\end{theorem}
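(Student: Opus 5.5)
The statement is only meaningful when the first argument is a split generator: if $N\notin\langle M\rangle$ the complexity $\delta_t(M,N)$ is infinite while $\epsilon_t(M,N)$ may be finite. So I would prove both inequalities for $M=\mathcal G$ a fixed split generator and $N=E$ arbitrary, working in a $k$-linear $\mathsf C$ with $\Hom$-finite graded morphism spaces. The shift convention in $\epsilon_t$ (whether $\ext^m$ is weighted by $e^{-mt}$ or $e^{mt}$) has to be matched to that of $\delta_t$. Doing so only replaces $t$ by $-t$ inside the constants, so I ignore it below. The plan is to prove a ``subadditivity'' bound in one direction and a ``resolution of the diagonal'' bound in the other.

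\textbf{Step 1 (Ext-distance is bounded by complexity).} Take a tower~(\ref{eq: tower}) for $E\oplus E'$ with cones $\mathcal G[n_1],\dots,\mathcal G[n_p]$. Apply $\RHom(\mathcal G,-)$ to each triangle and use the long exact sequences. The weighted dimension $\epsilon_t(\mathcal G,-)$ is subadditive on triangles and invariant under isomorphism, so
$$\epsilon_t(\mathcal G,E)\le\epsilon_t(\mathcal G,E\oplus E')\le\sum_{i=1}^p\epsilon_t(\mathcal G,\mathcal G[n_i])=\epsilon_t(\mathcal G,\mathcal G)\sum_{i=1}^p e^{n_it}.$$
Taking the infimum over towers gives $\epsilon_t(\mathcal G,E)\le \epsilon_t(\mathcal G,\mathcal G)\,\delta_t(\mathcal G,E)$. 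The constant $\epsilon_t(\mathcal G,\mathcal G)$ is finite exactly when the category is proper, i.e.\ has finite-dimensional total $\ext$. Compactness of a saturated enhancement supplies this, and this is where properness is used.

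\textbf{Step 2 (complexity is bounded by Ext-distance, via smoothness).} Let $\mathcal A=\RHom(\mathcal G,\mathcal G)$, so that $\mathsf C\simeq\mathrm{Perf}(\mathcal A)$ and $E$ corresponds to $\RHom(\mathcal G,E)$. Smoothness means the diagonal bimodule $\mathcal A$ lies in the thick subcategory of $\mathcal A^{op}\otimes\mathcal A$-modules generated by $\mathcal A^{op}\otimes\mathcal A$. Fix once and for all a finite tower exhibiting $\mathcal A$ as a summand of an iterated cone of shifts $(\mathcal A^{op}\otimes\mathcal A)[m_j]$, $j=1,\dots,q$. Apply the exact functor $\RHom(\mathcal G,E)\otimes_{\mathcal A}-$. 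It sends the diagonal to $\RHom(\mathcal G,E)\leftrightarrow E$, and it sends $\mathcal A^{op}\otimes\mathcal A$ to $\RHom(\mathcal G,E)\otimes_k\mathcal A\leftrightarrow \bigoplus_m \mathcal G[-m]^{\oplus\dim\ext^m(\mathcal G,E)}$. So $E$ is a summand of an object built in $q$ steps from finite direct sums of these. Since $\delta_t$ is subadditive along triangles and additive on direct sums, we get
$$\delta_t(\mathcal G,E)\le\Big(\sum_{j=1}^q e^{m_jt}\Big)\epsilon_t(\mathcal G,E),$$
a constant depending only on $t$ and the chosen diagonal resolution.

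\textbf{Main obstacle.} The delicate point is Step 2, specifically making the $A_\infty$ tensor-product argument honest. One has to check that $-\otimes_{\mathcal A}$ applied to a tower of bimodules produces a tower of distinguished triangles in $\mathsf C$, and that ``direct summand'' is preserved. This is exactly why an $A_\infty$ (or dg) enhancement is needed rather than the bare triangulated structure. I would first try the cleanest route: work in the dg category of $\mathcal A$-bimodules, where tensoring is a dg functor and hence induces an exact functor on homotopy categories. I would then use that splitting of idempotents is preserved by additive functors. With both inequalities established for the generator, the independence statements (\cite[Lemma~2.5]{Dimitrov2014DynamicalCategories}) make the choice of $\mathcal G$ harmless for the later applications to entropy.
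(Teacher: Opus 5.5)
Your argument is correct. It is the standard argument of \cite{Dimitrov2014DynamicalCategories}, which the paper only cites. The long exact sequences for $\RHom(\mathcal G,-)$ give $\epsilon_t(\mathcal G,E)\le\epsilon_t(\mathcal G,\mathcal G)\,\delta_t(\mathcal G,E)$. Tensoring a fixed finite tower for the diagonal bimodule with $\RHom(\mathcal G,E)$ gives $\delta_t(\mathcal G,E)\le\bigl(\sum_j e^{m_jt}\bigr)\epsilon_t(\mathcal G,E)$. Two minor points. First, with the paper's conventions no sign adjustment is needed: $\epsilon_t(\mathcal G,E[n])=e^{nt}\epsilon_t(\mathcal G,E)$ matches the weight $e^{nt}$ of a cone $\mathcal G[n]$. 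Second, $\delta_t$ is only subadditive on direct sums, not additive, but subadditivity is all you use.

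You should say explicitly that you have proved a corrected statement, not the printed one. The printed statement asserts $\delta_t\le C_1(t)\,\epsilon_t$ for \emph{every} triangulated category, and the reverse bound only under saturation. Your Step~1 gives the reverse bound using only properness; it does not even need $\mathcal G$ to generate. Your Step~2 uses smoothness. So the printed statement attaches the hypotheses to the wrong inequalities, and the upper bound genuinely fails for proper non-smooth categories, even with a fixed split generator as first argument.

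Here is a counterexample. In $\mathsf{Perf}(A)$ with $A=k[x]/(x^2)$ and $\mathcal G=A$, let $E_n$ be the complex $A\xrightarrow{x}A\xrightarrow{x}\cdots\xrightarrow{x}A$ with $n\ge 2$ terms. Its cohomology is a single $k$ in each of the two extreme degrees, so $\epsilon_0(A,E_n)=2$. The exact functor $-\otimes^{\mathbf L}_A k$ sends $A[m]$ to $k[m]$, and sends $E_n$ to $n$ copies of $k$ with zero differentials. Total cohomology is subadditive along triangles, so every tower for $E_n\oplus E'$ has at least $n$ cones, i.e.\ $\delta_0(A,E_n)\ge n$.

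Likewise, a nonzero $C_2$ cannot be uniform in the first argument: $\delta_t(\mathcal G^{\oplus r},\mathcal G^{\oplus r})\le 1$, while $\epsilon_t(\mathcal G^{\oplus r},\mathcal G^{\oplus r})=r^2\epsilon_t(\mathcal G,\mathcal G)$. The correct theorem is exactly what you prove: for a fixed split generator, with constants depending on it, the lower bound holds under properness and the upper bound under saturation.
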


Now, consider the (positive) orbit algebra of an endofuntor $F$ on the category $\mathsf C$, that is, the bigraded algebra 
$$Or = Or(F,\mathcal{O} ) = \bigoplus_{n\ge 0} \bigoplus_{m\in \inte} Or_{n,m} \mbox{ with }Or_{n,m}  = \ext^m (\mathcal{O},F^n\mathcal{O}),
$$
where $\mathcal{O}$ is an object of  $\mathsf C$. Then the $n$-th graded component $Or_{n} = \bigoplus_m Or_{n,m} $ is $\RHom^\cdot (\mathcal{O},F^n\mathcal{O}) $,  so that its Hilbert series of the variable $e^{-t}$ is 
$$
H_{Or_{n} } (e^{-t}) = \epsilon_t(\mathcal{O},F^n\mathcal{O}) .
$$
Then Theorem~\ref{th: smooth_ext_distance} implies the following connections between the algebraic entropies and categorical entropies. It is essentially proved in \cite[Theorem~2.5]{Dimitrov2014DynamicalCategories} and \cite[Lemma 2.7]{fan2021categorical}.

\begin{theorem}
\label{th: smooth_quoted}
(a) Let $\mathsf C$ be a triangulated category with generator $\mathcal{O}$, let $F$ be an exact endofunctor on $\mathsf C$. Denote by $Or = Or(F, \mathcal{O})$  the corresponding orbit algebra.
Then we have the inequality
$$
\mathsf{h}_t(F)  \le \mathsf{h}_t(Or). 
$$
Moreover, if $\mathsf{h}_t(F)  =   \mathsf{h}_t(Or) =0$, then  
$$
\hpol_t(F)  \le  \hpol_t(Or).
$$

(b) Suppose in addition that the triangulated category $\mathsf C$ admits an $A_\infty$ enhancement which is {\em saturated}, that is,  
Morita equivalent to a smooth and compact $A_\infty$-algebra. Then the equalities
$$
\mathsf{h}_t(F)  = \mathsf{h}_t(Or)
\mbox{ and }
\hpol_t(F)=  \hpol_t(Or)
$$
hold. 
\end{theorem}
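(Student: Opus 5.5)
We specialize Theorem~\ref{th: smooth_ext_distance} to $M=\mathcal{O}$ and $N=F^n\mathcal{O}$. Since $\mathcal{O}$ is a generator, the definitions of $\mathsf{h}_t(F)$ and $\hpol_t(F)$ may be computed with $\mathcal{G}=\mathcal{O}$. The complexity $\delta_t(\mathcal{O},F^n\mathcal{O})$ is then compared directly with
$$
\epsilon_t(\mathcal{O},F^n\mathcal{O}) = H_{Or_n}(e^{-t}).
$$

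\textbf{Part (a).} By the first half of Theorem~\ref{th: smooth_ext_distance},
$$
\delta_t(\mathcal{O},F^n\mathcal{O}) \le C_1(t)\, H_{Or_n}(e^{-t}) \quad \text{for all } n.
$$
Take logarithms and divide by $n$. The term $\log C_1(t)/n$ tends to $0$ as $n\to\infty$. Since the limit defining $\mathsf{h}_t(F)$ exists, it is bounded by the $\limsup$ defining $\mathsf{h}_t(Or)$, which gives $\mathsf{h}_t(F)\le \mathsf{h}_t(Or)$.

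If both entropies vanish, divide the logarithmic inequality by $\log n$ instead. The constant $\log C_1(t)/\log n$ again tends to $0$, and taking $\limsup$ gives $\hpol_t(F)\le \hpol_t(Or)$ via formulas~(\ref{eq: pol_ent_pol_growth}) and~(\ref{eq:hpol_alg_pol_growth}).

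One caveat concerns the case $C_1(t)=0$ or $H_{Or_n}=\infty$. Then the inequality is trivial, or the right-hand side is $+\infty$, and the claim still holds.

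\textbf{Part (b).} In the saturated case the second half of Theorem~\ref{th: smooth_ext_distance} gives the reverse bound
$$
C_2(t)\,\epsilon_t \le \delta_t.
$$
Together with part (a), this squeezes $\log\delta_t(\mathcal{O},F^n\mathcal{O})$ between $\log H_{Or_n}(e^{-t})$ shifted by the two constants $\log C_2(t)$ and $\log C_1(t)$. Dividing by $n$ yields $\mathsf{h}_t(F)=\mathsf{h}_t(Or)$. Here the $\varlimsup$ for $Or$ is in fact a limit, because the $F$-side limit exists.

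For the polynomial entropy, subtract $n\,\mathsf{h}_t(F)=n\,\mathsf{h}_t(Or)$, which is the same quantity on both sides, and divide by $\log n$. The bounded constants again disappear in the limit, giving $\hpol_t(F)=\hpol_t(Or)$. Here, unlike in part (a), no vanishing hypothesis is needed, since the two exponential parts are equal.

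\textbf{Main obstacle.} The delicate point is ensuring that $C_2(t)>0$ and $\epsilon_t<\infty$. Otherwise the lower bound is vacuous, and the $-\infty$ and $+\infty$ conventions could break the subtraction $\log\delta_t-n\mathsf{h}_t$. Finiteness of $\epsilon_t$ comes from compactness: each $\RHom(\mathcal{O},F^n\mathcal{O})$ is a perfect $k$-complex. Positivity of $C_2$ is exactly what saturatedness provides in~\cite{Dimitrov2014DynamicalCategories}. For $t$ at which $\mathsf{h}_t=-\infty$, which happens only when the objects $F^n\mathcal{O}$ are eventually zero, we set the polynomial entropy by convention on both sides identically.
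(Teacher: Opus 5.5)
Your route is the one the paper takes. The paper gives no separate argument: it specializes Theorem~\ref{th: smooth_ext_distance} to $M=\mathcal{O}$, $N=F^n\mathcal{O}$ and uses $\epsilon_t(\mathcal{O},F^n\mathcal{O})=H_{Or_n}(e^{-t})$. Your bookkeeping (the constants, $\limsup$ versus limit, subtracting $n\,\mathsf{h}_t$ in (b)) is fine.

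The trouble is in part (a), and it comes from the quoted result. The first half of Theorem~\ref{th: smooth_ext_distance}, namely $\delta_t\le C_1(t)\,\epsilon_t$ for an arbitrary triangulated category, is not true.
\begin{itemize}
\item What holds in any Hom-finite category is the opposite bound $\epsilon_t(G,E)\le\epsilon_t(G,G)\,\delta_t(G,E)$. You get it by applying $\Hom^\bullet(G,-)$ to a tower and adding up the long exact sequences.
\item The bound $\delta_t(G,E)\le C(t)\,\epsilon_t(G,E)$ is where \cite{Dimitrov2014DynamicalCategories} use smoothness. There the diagonal lies in the thick closure of $G^\vee\boxtimes G$, so $E$ is built from $\RHom(G,E)\otimes G$ by a fixed finite number of cones.
\end{itemize}
So your step $\delta_t(\mathcal{O},F^n\mathcal{O})\le C_1(t)\,H_{Or_n}(e^{-t})$ fails without smoothness. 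In fact statement (a) itself fails.

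Here is a counterexample.
\begin{itemize}
\item Let $R=k[[u]]$, and let $\mathsf{C}=\mathbf{D}^b_{\mathrm{fl}}(R)$ be the complexes with finite-length cohomology. Take $\mathcal{O}=k=R/(u)$, which is a classical generator.
\item Let $F=-\otimes_R R'$, where $R'$ is $R$ viewed as a module via $u\mapsto u^2$. It is free of rank two, so $F$ is exact and $F^nk\cong R/(u^{2^n})$.
\item A composition series gives $\delta_t(k,R/(u^m))\le m$. Applying the exact forgetful functor to $\mathbf{D}^b(k)$ shows every tower needs at least $m$ cones $k[0]$. Hence $\delta_t(k,R/(u^m))=m$.
\item On the other hand, $\ext^\bullet_R(k,R/(u^m))$ is one-dimensional in degrees $0$ and $1$ and zero elsewhere. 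So $H_{Or_n}(e^{-t})=1+e^{-t}$ for all $n$.
\end{itemize}
Hence $\mathsf{h}_t(F)=\log 2>0=\mathsf{h}_t(Or)$.

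Your argument for (a) becomes valid once smoothness is assumed. For Hom-finite categories, the inequality that holds in general is the reverse one, $\mathsf{h}_t(Or)\le\mathsf{h}_t(F)$.

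Part (b) is unaffected. In the saturated case, smoothness gives your upper bound and properness gives the lower bound with $C_2(t)=\epsilon_t(\mathcal{O},\mathcal{O})^{-1}>0$. So it is properness, not smoothness, that makes $C_2$ positive. Also, the paper's later applications only use (b).
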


\section{Smooth commutative and noncommutative varieties}

\label{sec: smooth}

If the graded algebra $A$ is smooth, the categorical and polynomial entropies of the Serre twist can be calculated by Theorem~\ref{th: smooth_quoted}. The benchmark case here is the coordinate ring of a smooth projective 
variety~\cite{Dimitrov2014DynamicalCategories, fan2021categorical}.

\begin{proposition}
\label{prop: smooth_proj}
Let $X$ be a smooth projective variety with the coordinate ring $A$ (so that $\dim X = \gkd A - 1$). Then 
$
\mathsf{h}_t (S) = 0 =  \mathsf{h} (A) 
$
and
$$
\hpol_t(S) = \gkd A-1. 
$$
\end{proposition}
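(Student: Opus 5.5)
We apply Theorem~\ref{th: smooth_quoted}(b) to $\mathsf C = \mathbf{D}^b(\mathsf{qgr}(A))$. By Serre's theorem, $\mathsf{qgr}(A) \simeq \mathrm{coh}(X)$, with $\mathcal O$ the structure sheaf $\mathcal O_X$ and the Serre twist $S$ given by $-\otimes \mathcal O_X(1)$. Since $X$ is smooth and projective, $\mathbf{D}^b(\mathrm{coh}\, X)$ has a classical generator; for instance $G = \bigoplus_{i=0}^{\dim X}\mathcal O_X(i)$ works (Orlov, or Bondal--Van den Bergh). It also admits a saturated dg (hence $A_\infty$) enhancement, because $\mathrm{Perf}(X)$ is smooth and proper. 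Note that $A$ here is the homogeneous coordinate ring $\bigoplus_n H^0(X,\mathcal O(n))$, or agrees with it in large degrees, so growth invariants are unaffected.

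The one care point is that Theorem~\ref{th: smooth_quoted} is stated for the orbit algebra of a \emph{generator}, and $\mathcal O_X$ alone is not a generator. I would therefore use the generator $G$ above. Entropies are independent of the generator, and Theorem~\ref{th: smooth_quoted}(b) gives
\[
\mathsf h_t(S)=\mathsf h_t(Or(S,G)),\qquad \hpol_t(S)=\hpol_t(Or(S,G)).
\]
The next step is to compute $\epsilon_t(G,S^nG)=\sum_{i,j}\sum_m \dim H^m(X,\mathcal O(n+j-i))e^{-mt}$. By Serre vanishing, for $n\gg0$ only $m=0$ contributes, and $\dim H^0(X,\mathcal O(k))$ equals the Hilbert polynomial $P(k)$, which has degree $\dim X$ and positive leading coefficient. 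Hence $\epsilon_t(G,S^nG)$ is, for large $n$, a polynomial in $n$ of degree $\dim X$ that is independent of $t$. It follows that $\mathsf h_t(S)=0$ and $\hpol_t(S)=\dim X=\gkd A-1$.

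Finally, $\mathsf h(A)=0$ because $\dim A_n=P(n)$ is polynomial for large $n$, and $\gkd A=\dim X+1$ follows from the same Hilbert polynomial, as in the introduction. The only step needing care is matching the generator in Theorem~\ref{th: smooth_quoted} with the object $\mathcal O$. Working with $G$ rather than $\mathcal O_X$ handles this: the finite sum over the shifts $j-i$ changes the polynomial $P(n)$ only by a bounded shift in its argument, which preserves its degree and positivity. Alternatively, one can simply cite \cite[Lemma~2.7 and examples]{fan2021categorical} and \cite{Dimitrov2014DynamicalCategories}, where the entropy of $\otimes\mathcal O(1)$ is computed.
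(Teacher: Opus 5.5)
Your argument is correct, but it takes a different route from the paper. The paper's proof consists of two citations. It identifies $S$ with $-\otimes\mathcal O_X(1)$ and applies \cite[Lemma~6.5]{fan2021categorical}, which gives $\hpol_t(-\otimes L)=\dim X$ for any nef and big line bundle $L$. It then takes $\mathsf h_t(S)=0$ from \cite[Lemma~2.13]{Dimitrov2014DynamicalCategories}.

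You instead carry out the computation directly through the saturated-category comparison of Theorem~\ref{th: smooth_quoted}(b). You choose a generator $G=\bigoplus_i\mathcal O_X(i)$ and use Serre vanishing to see that the orbit algebra $Or(S,G)$ is concentrated in Ext-degree $0$ in large degrees. You then read off $\epsilon_t(G,S^nG)=\sum_{i,j}P(n+j-i)$, a polynomial in $n$ of degree $\dim X$ with positive leading coefficient and no dependence on $t$.

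This is exactly the template the paper uses for its own proof of Corollary~\ref{cor:reg} on regular algebras. Your version therefore makes the commutative and noncommutative smooth cases visibly parallel, and it shows concretely why both entropies are constant in $t$ and tied to the Hilbert function of $A$. The cited route is shorter and works for nef and big line bundles in general. There higher cohomology of $L^{n}$ need not vanish, so your Serre-vanishing step would not apply, but that generality is not needed here.

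You correctly flagged that $\mathcal O_X$ alone is not a generator, so one must pass to $G$. The exact number of twists in $G$ does not matter. For instance, $\bigoplus_{i=0}^{N}\mathcal O_X(i)$ for $X\subset\PP^N$ is a generator by restricting the Koszul sequences on $\PP^N$. Only a bounded shift of the argument of $P$ changes, so the degree count is unaffected.
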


\begin{proof}
By~\cite[Lemma~6.5]{fan2021categorical}, for a nef and big line bundle $L$ on $X$ one have 
$\hpol_t(-\otimes L) = \dim X$. Since the line bundle ${\mathcal O}_X(1)$ is nef and big, we have the polynomial entropy value
$\hpol_t(S) =  \hpol_t(- \otimes {\mathcal O}_X(1)) = \gkd A-1$. The categorical entropy of this functor is zero by~\cite[Lemma~2.13]{Dimitrov2014DynamicalCategories}.
\end{proof}

Now, consider noncommutative regular algebras.
We use here the following theorem~\cite[Theorem 3.7]{elagin2020smoothness}:
if, for a finite-dimensional $k$-algebra $R$, the semisimple algebra $R/\rad R$ is separable over $k$,
then the category $\mathbf{D}^b(\mathsf{mod-}R)$ admits a smooth and compact DG-enhancement. 
The separability condition here is very weak; in particular, it always holds if the field $k$ is perfect. 

\begin{proposition}
\label{prop:AS-reg_smooth}
Suppose that $A$ is a 
 graded coherent regular algebra.  
Suppose in addition that  the  algebra $A_0/\rad A_0$ is separable over $k$
(this condition always holds if either $k$ is a perfect field or $A$ is connected).
Then the triangulated category 
$\mathbf{D}^b(\mathsf{qgr}A)$
admits a smooth and compact DG-enhancement.
\end{proposition}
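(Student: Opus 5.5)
The plan is to reduce $\mathbf{D}^b(\mathsf{qgr}A)$ to the bounded derived category of a finite-dimensional algebra and then invoke \cite[Theorem 3.7]{elagin2020smoothness}. The natural tool is Minamoto's result for regular algebras \cite{minamoto2011structure}, a noncommutative analogue of Beilinson's theorem. Let $d$ be the global dimension and $l$ the Gorenstein parameter. Then the object $T=\bigoplus_{i=0}^{l-1}\pi A[i]$ is a tilting object in $\mathbf{D}^b(\mathsf{qgr}A)$ (Minamoto proves this for coherent regular algebras, with a finite-dimensionality hypothesis on $A_0$). Its endomorphism algebra is the Beilinson algebra
$$
\nabla A=\begin{pmatrix} A_0 & A_1 & \cdots & A_{l-1}\\ 0 & A_0 & \cdots & A_{l-2}\\ \vdots & & \ddots & \vdots\\ 0&0&\cdots&A_0\end{pmatrix},
$$
and this gives an equivalence $\mathbf{D}^b(\mathsf{qgr}A)\simeq \mathbf{D}^b(\mathsf{mod}\text{-}\nabla A)$. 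I would state this as the first step and check that its hypotheses hold. Coherence ensures $\mathsf{qgr}A$ is abelian. The Gorenstein condition and finite global dimension give the vanishing $\ext^{>0}(\pi A[i],\pi A[j])=0$ for $0\le i,j<l$; this comes from the local cohomology computation $R\Gamma_{\mathfrak m}(A)\cong D(A)[\!-l\!](-d)$, which is a standard consequence of the Gorenstein property. The fact that $T$ generates follows from Koszul-type resolutions of $A_0$: these express every $\pi A[i]$ through the twists $\pi A[j]$ with $0\le j<l$.

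Second, $\nabla A$ is a finite-dimensional algebra, and its radical is the strictly upper triangular part together with the diagonal copies of $\rad A_0$. Hence
$$
\nabla A/\rad\nabla A\cong (A_0/\rad A_0)^{\times l}.
$$
This is separable over $k$ exactly when $A_0/\rad A_0$ is. The parenthetical cases in the statement are then immediate. In the connected case $A_0/\rad A_0=k$. Over a perfect field every semisimple algebra is separable.

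Third, applying \cite[Theorem 3.7]{elagin2020smoothness} to $R=\nabla A$ gives a smooth and compact DG-enhancement of $\mathbf{D}^b(\mathsf{mod}\text{-}\nabla A)$. Transporting it along the equivalence gives the claimed enhancement of $\mathbf{D}^b(\mathsf{qgr}A)$. The equivalence comes from a tilting object, so it is induced by $\RHom(T,-)$ on the standard DG-enhancement and respects enhancements.

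The main obstacle is the first step: checking that Minamoto's tilting theorem applies under exactly these hypotheses. Possible gaps are the non-Noetherian coherent case, the possibility that $A_0$ is not semisimple, and the identification of $\mathbf{D}^b(\mathsf{qgr}A)$ with the category in which Minamoto works (the perfect or bounded derived category of $\qgr$, versus $\Qgr$). If the cited version requires $A_0$ of finite global dimension, that follows from regularity of $A$: a graded algebra of finite global dimension has $\gd A_0\le\gd A$. If coherence is not covered there, I would redo the $\ext$-vanishing directly in $\Qgr A$ via the local cohomology triangle. Then I would use coherence only to know that $\mathbf{D}^b(\mathsf{qgr}A)$ embeds fully faithfully and is generated by the twists of $\pi A$.
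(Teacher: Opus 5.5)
Your first two steps coincide with the paper's proof: Minamoto's equivalence $\mathbf{D}^b(\mathsf{qgr}A)\simeq\mathbf{D}^b(\mathsf{mod}\text{-}\nabla A)$, and the identification $\nabla A/\rad\nabla A\cong(A_0/\rad A_0)^{l}$ to get separability. The gap is in your third step. \cite[Theorem~3.7]{elagin2020smoothness} gives only \emph{smoothness} of $\mathbf{D}^b(\mathsf{mod}\text{-}R)$, not compactness. Compactness genuinely fails for finite-dimensional algebras of infinite global dimension. For example, for $R=k[x]/(x^2)$ one has $\ext^n_R(k,k)\cong k$ for every $n\ge 0$. So $\bigoplus_n\Hom(k,k[n])$ is infinite-dimensional, and $\mathbf{D}^b(\mathsf{mod}\text{-}R)$ admits no compact enhancement. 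In your argument the finite global dimension of $A$ is never used to obtain compactness, and that is exactly where it is needed.

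The missing ingredient is $\gd\nabla A<\infty$. The paper gets it by transporting $\gd A<\infty$ across Minamoto's equivalence: $\mathbf{D}^b(\mathsf{mod}\text{-}\nabla A)$ then has finite global dimension, so the smooth enhancement from \cite{elagin2020smoothness} is also compact. You already have the pieces for a direct argument. You observed $\gd A_0\le\gd A<\infty$. Moreover, $\nabla A$ is an iterated upper triangular matrix algebra whose diagonal entries are all $A_0$, so $\gd\nabla A<\infty$. Then $\mathbf{D}^b(\mathsf{mod}\text{-}\nabla A)\simeq\mathrm{Perf}(\nabla A)$ with $\nabla A$ finite-dimensional, so all total $\ext$-spaces between objects are finite-dimensional. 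That is the compactness the statement requires.
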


\begin{proof}
By \cite[Theorem 4.14]{minamoto2011structure}, the triangulated category $\mathbf{D}^b(\mathsf{qgr}A)$ is equivalent 
to the category $\mathbf{D}^b(\mathsf{mod-}\nabla A)$ for certain finite-dimensional algebra $\nabla A$.
By \cite[Theorem 3.7]{elagin2020smoothness}, this category admits a smooth DG-enhancement by bounded above complexes of finitely generated projective $\nabla A$-modules with bounded cohomology (the condition that $\nabla A/\rad \nabla A$ is a separable extension of $k$ holds since $\nabla A/\rad \nabla A = (A_0/\rad A_0)^l$ with $l>0$, by \cite[Definition~4.7]{minamoto2011structure})). Since $A$ has finite global dimension, the category
$\mathbf{D}^b(\mathsf{mod-}\nabla A)$  has finite global dimension as well. It follows that  
the above DG-enhancement is compact. 
\end{proof}

\begin{corollary}
\label{cor:reg}
Suppose that a 
graded 
algebra $A$ is regular and coherent. Suppose in addition that either $k$ is a perfect field or $A$ is connected 
(or, more generally, that the $k$-algebra $A_0/\rad A_0$ is separable).  
Then the category 
$\mathbf{D}^b(\mathsf{qgr}A)$ admits a split generator. Moreover, we have 
$$
\mathsf{h}_t (S) =  \mathsf{h} (A). 
$$
If $A$ has polynomial growth (that is, $A$ is Artin--Schelter regular), then
$$
\hpol_t(S) = \gkd A-1. 
$$
\end{corollary}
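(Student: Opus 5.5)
By Proposition~\ref{prop:AS-reg_smooth}, the category $\mathbf{D}^b(\mathsf{qgr}A)$ admits a smooth and compact DG-enhancement. So the plan is to combine this with Theorem~\ref{th: smooth_quoted}(b), choosing the structure sheaf $\mathcal{O}$ (the image of $A_A$) as the object defining the orbit algebra. The proof then splits into three steps.

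\textbf{Step 1: a split generator.} I would first show that $\mathbf{D}^b(\mathsf{qgr}A)$ admits a split generator. Via Minamoto's equivalence $\mathbf{D}^b(\mathsf{qgr}A)\simeq \mathbf{D}^b(\mathsf{mod-}\nabla A)$, with $\nabla A$ finite-dimensional of finite global dimension, the module $\nabla A$ itself is a classical generator. Concretely, this corresponds to $\mathcal{G}=\bigoplus_{i=0}^{l-1} s^{i}\mathcal{O}$, where $l$ is the Gorenstein parameter. The object $\mathcal{O}$ alone need not generate, so I would apply Theorem~\ref{th: smooth_quoted}(b) to the generator $\mathcal{G}$ rather than to $\mathcal{O}$.

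\textbf{Step 2: compute the orbit algebra.} This amounts to computing $\ext^m(s^i\mathcal{O}, S^n s^j\mathcal{O})=\ext^m_{\mathsf{qgr}}(\mathcal{O},\mathcal{O}(n+j-i))$. For a regular (Gorenstein) algebra, local cohomology is concentrated in degree $d$, and the standard Artin--Zhang computation gives
$$\ext^0(\mathcal{O},\mathcal{O}(k))=A_k \quad \text{for } k\ge 0,$$
and
$$\ext^m(\mathcal{O},\mathcal{O}(k))=0 \quad \text{for } 0<m<d-1 .$$
The remaining group $\ext^{d-1}(\mathcal{O},\mathcal{O}(k))\cong (A_{-k-l})^*$ vanishes for $k>-l$. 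Hence for $n\ge l$ all summands are concentrated in degree $m=0$, and
$$H_{Or_n}(e^{-t})=\sum_{i,j}\dim A_{n+j-i},$$
which is independent of $t$.

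\textbf{Step 3: compare with the growth of $A$.} This sum is bounded between $\dim A_{n-l+1}$ and $l^2\max_{|r|<l}\dim A_{n+r}$. Therefore $\mathsf{h}_t(Or)=\mathsf{h}(A)$, and in the polynomial growth case $\hpol_t(Or)=\limsup \log\dim A_n/\log n=\gkd A-1$. Theorem~\ref{th: smooth_quoted}(b) then gives both claimed equalities. For the polynomial statement I need $\mathsf{h}_t(Or)=0$, which follows from subexponential growth.

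The main obstacle is Step~2 in the non-connected and merely coherent (non-Noetherian) setting. There, the vanishing of the higher $\ext$ groups in $\mathsf{qgr}$ requires the $\chi$-condition and a local cohomology computation for coherent regular algebras. I would try first to read these groups off Minamoto's equivalence, where $S$ corresponds to an explicit derived tensor functor. Failing that, I would use the Gorenstein condition through the minimal projective resolution of $A_0$ to compute $\RHom$ in $\mathsf{Qgr}$ directly.
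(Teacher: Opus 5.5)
Your proposal is correct and matches the paper's proof: the same classical generator $G=\bigoplus_{i=0}^{l-1}\mathcal{O}(i)$ from Minamoto, the saturated enhancement of Proposition~\ref{prop:AS-reg_smooth} fed into Theorem~\ref{th: smooth_quoted}(b), and the reduction of $\ext^\cdot(G,S^nG)$ to the finite sum $\sum_{i,j}\dim A_{n+j-i}$ (the paper phrases this as $Or(S,G)$ being a finite free module over $Or(S,\mathcal{O})\simeq A$). The obstacle you flag in Step~2 is handled in the paper by citing \cite[Proposition~4.4]{minamoto2011structure}, which gives $\ext^m_{\qgr A}(\mathcal{O},S^n\mathcal{O})=0$ for $m\neq 0$ and $\ext^0_{\qgr A}(\mathcal{O},S^n\mathcal{O})=A_n$ for all $n\ge 0$, used in the same generality as the corollary.
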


\begin{proof}
By~\cite[Proposition~4.3]{minamoto2011structure}, the triangulated category
$\mathbf{D}^b(\mathsf{qgr}A)$ admits a classical generator of the form $G =\bigoplus_{i=0}^{l-1} {\mathcal O}(i)$
for some $l>0$ (we denote by $\mathcal O$ the image of the module $A_A$ in $\mathbf{D}^b(\mathsf{qgr}A)$).
In view of Proposition~\ref{prop:AS-reg_smooth} and Theorem~\ref{th: smooth_quoted}, one can calculate
the complexity of functors on $\mathbf{D}^b(\mathsf{qgr}A)$ by the ext-distance.

Denote $d = \gd A$
 and $a_n = \dim A_n$.
 According 
 to~\cite[Proposition~4.4]{minamoto2011structure}, for all $n\ge 0$
one has $\ext_{\mathsf{qgr}A}^{m} ({\mathcal O}, S^n{\mathcal O})  =0 $ for $m\ne 0$ and 
$\ext_{\mathsf{qgr}A}^{0} ({\mathcal O}, S^n{\mathcal O})  = A_n $.
Then 
we have
$$
\epsilon_t({\mathcal O}, S^n{\mathcal O})  = \sum_{m\in \inte} \dim_k \ext_{\mathsf{qgr}A}^{m} ({\mathcal O}, S^n{\mathcal O}) e^{-mt} = a_n,
$$
so that the algebra $Or(S, {\mathcal O})$ is concentrated in its part $Or(S, {\mathcal O})_{*, 0} $ which is isomorphic to $A$.
Since $G=\bigoplus_{i=0}^{l-1} {\mathcal O}(i)$ is a compact object, we have 
$$
\ext^\cdot (G, S^nG) =  \ext^\cdot(\bigoplus_{i=0}^{l-1} {\mathcal O}(i),\bigoplus_{j=0}^{l-1} S^n{\mathcal O}(j) ) 
= \bigoplus_{i=0}^{l-1} \bigoplus_{j=0}^{l-1}  \ext^\cdot( {\mathcal O}(i), S^n{\mathcal O}(j) ) 
$$
$$
=  \bigoplus_{i=0}^{l-1} \bigoplus_{j=0}^{l-1} \ext^\cdot( {\mathcal O}, S^n{\mathcal O}(j-i) ), 
$$
so that the orbit algebra $Or(S,G) = \bigoplus_{n\ge 0} \ext^\cdot (G, S^nG)$ for the generator $G$
is isomorphic to the finite free module 
$\bigoplus_{i=0}^{l-1} \bigoplus_{j=0}^{l-1}Or(S, {\mathcal O})(j-i) $
over the algebra $Or(S, {\mathcal O})\simeq A$. Then 
$$\mathsf{h}_t (S) = \mathsf{h}_t \, Or(S,G) = \mathsf{h}_t  \, Or(S, {\mathcal O}) = \mathsf{h} (A)  .
$$ 
If $A$ has subexponential growth, then 
$\mathsf{h}_t (S)  = \mathsf{h}_t (Or(S,G)) =  \mathsf{h} (A) =0$, so that
$$
\hpol_t(S) = \hpol_t \, Or(S,G) = \hpol_t \,Or(S, {\mathcal O}) = \hpol (A) =  \gkd A-1. 
 $$
\end{proof}

For example, let $Q = (V,E)$ be a finite acyclic quiver of infinite representation type. Recall that its preprojective algebra  $\Pi_kQ $ is defined as the quotient algebra $k\overline Q/(r)$,
where $\overline Q$ is the quiver obtained from $Q$ by adding an opposite edge $a^*$ for each edge $a$ of $Q$ and $r$ is the quadratic element $\sum_{a\in E} (aa^*-a^*a)$. 
The preprojective algebra $\Pi_kQ $
is regular and graded coherent with respect to the grading defined by $\deg a=0, \deg a^* =1$ for all $a\in E$~(see  \cite[Th. 4.14]{minamoto2011structure} and 
\cite[Cor. 5.4]{minamoto2012ampleness}; for another proof, see \cite[Th. 5.1]{krause2026serre}).  

\begin{corollary}
\label{cor:prepr}
Let $A = \Pi_kQ $ be the preprojective algebra of a finite acyclic quiver $Q$ of infinite representation type. Then the category $\mathbf{D}^b(\mathsf{qgr}A)$ admits a split generator and
$$
\mathsf{h}_t (S) =  \mathsf{h} (A). 
$$
If $A$ has subexponential growth, then $
\hpol_t(S) = \gkd A-1. 
$
\end{corollary}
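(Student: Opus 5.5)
The plan is to deduce Corollary~\ref{cor:prepr} directly from Corollary~\ref{cor:reg}, so the work lies in checking its hypotheses for $A=\Pi_kQ$ with the grading $\deg a=0$, $\deg a^*=1$. As recalled just before the statement, this graded algebra is regular and graded coherent by \cite[Th.~4.14]{minamoto2011structure} and \cite[Cor.~5.4]{minamoto2012ampleness} (alternatively \cite[Th.~5.1]{krause2026serre}). I will also record that $A$ is finitely generated and locally finite in this grading. The degree-zero part $A_0$ is the path algebra $kQ$, which is finite dimensional because $Q$ is acyclic. Each $A_n$ is then a finitely generated $kQ$-bimodule, and so it is finite dimensional.

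The remaining hypothesis of Corollary~\ref{cor:reg} is the separability of $A_0/\rad A_0$. The algebra $A$ is not connected, so I cannot use the connected case. I also do not want to assume that $k$ is perfect. Instead I will use the explicit form of $A_0$: since $A_0=kQ$ with $Q$ a finite acyclic quiver, its radical is spanned by the paths of positive length. Hence $A_0/\rad A_0\cong k^{V}$ is a finite product of copies of $k$, which is separable over $k$ for any field $k$. With this check done, all hypotheses of Corollary~\ref{cor:reg} hold.

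Corollary~\ref{cor:reg} then gives that $\mathbf{D}^b(\mathsf{qgr}A)$ has a split generator, explicitly $\bigoplus_{i=0}^{l-1}\mathcal O(i)$, and that $\mathsf{h}_t(S)=\mathsf{h}(A)$. If $A$ has subexponential growth, then $\mathsf{h}(A)=0$. Both $\hpol_t(S)$ and $\hpol(A)$ are then computed by the formulas for the zero-entropy case, and the last part of Corollary~\ref{cor:reg} yields $\hpol_t(S)=\gkd A-1$.

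I do not expect a serious obstacle, but two points need care. First, Corollary~\ref{cor:reg} is stated for polynomial growth while this statement assumes only subexponential growth. I will check that its proof uses only $\mathsf{h}(A)=0$, which is exactly what subexponential growth gives, together with the identity $\gkd A=\hpol_0(A)+1$ from Section~\ref{sec:back}. Second, I will make sure that the grading used in the cited coherence and regularity results matches the grading $\deg a=0$, $\deg a^*=1$ fixed in the statement.
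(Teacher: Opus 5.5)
Your proposal is correct and matches the paper's proof. The paper likewise notes that $A_0\cong kQ$, so $A_0/\rad A_0\cong k^{|V|}$ is separable over any field, and then applies Corollary~\ref{cor:reg}; your additional check that the proof of Corollary~\ref{cor:reg} uses only $\mathsf{h}(A)=0$, so that subexponential growth suffices, is a point the paper passes over silently, and it does hold.
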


\begin{proof}
For the preprojective algebra $A = \Pi_kQ$, its degree zero part $A_0$ is isomorphic to $kQ$, so that  $A_0 / \rad A_0 = k^m$, where $m =|V|$ is the number of vertices of $Q$. The latter algebra is a separable extension of $k$. So, one can apply  
Corollary~\ref{cor:reg}. 
\end{proof}

As an example of an explicit entropy calculations, consider standard regular algebras of dimension two. 


\begin{example}[Noncommutative projective lines]

\label{example:NCline}
A standard algebra is regular of global dimension two  if it has the form $A = k \langle x_1, \dots, x_g| f \rangle$,
where $g\ge 2$ and $f$ is a quadratic homogeneous polynomial of $x_1, \dots, x_g$ of tensor rank $g$~\cite{zhang1998non}. All such algebras are graded coherent~\cite{piontkovski2008coherent}. The corresponding noncommutative variety generalizes properties of the projective line~\cite{piontkovski2008coherent}.

Let's calculate the algebraic entropy of this algebra. By Corollary~\ref{cor:reg}, it is equal to the categorical entropy of the Serre twist.


By~\cite[Proposition 1.1]{zhang1998non}, the Hilbert series of the algebra $A$ is equal to $H_A(z) = (1-gz+z^2)^{-1}$, so that
$$
H_A(z) = \frac{1}{(1-zh_1)(1-zh_2)},
$$
where $h_1 = \frac{2}{g- \sqrt{g^2-4}} = \frac{g+ \sqrt{g^2-4}}{2}$
and  $h_2 = \frac{2}{g+ \sqrt{g^2-4}}$  are the inverses of the roots $z_1, z_2$ of the polynomial $1-gz+z^2$. 
If $g=2$, one gets $h_1 = h_2 = 1$ and
$
\dim A_n  = n+1.
$
If $g\ge 3$, we have $h_1\ge 1$ and $0< h_2 <1$. Then
$$
H_A(z)= \sqrt{g^2-4}\left(\frac{h_1}{1-zh_1} - \frac{h_2}{1-zh_2}  \right), 
$$
so that 
$$
\dim A_n = 
 \sqrt{g^2-4}\left( h_1^{n+1} - h_2^{n+1}\right)
=
 \sqrt{g^2-4} \, h_1^{n+1} +o(1).
$$
In both cases, we obtain 
$$
\mathsf{h}_t(S) =   \mathsf{h} (A) = \lim_{n\to \infty}\frac{\log \dim A_n}{n} = \log h_1  = \log \frac{g+ \sqrt{g^2-4}}{2}.
$$

If $g=2$, the algebra has zero entropy and polynomial growth. 
In this case, we have
$$
\hpol_t(S) = \gkd A-1 =  \limsup_{n\to \infty}\frac{\log \dim A_n}{\log n} = \limsup_{n\to \infty}\frac{\log (n+1)}{\log n} =1.
$$
\end{example}

\section{Inequalities for the entropies of Serre twist}

\label{sec: inec}

Suppose $A$ is a graded algebra 
over the simple algebra $\kkk = k^w$ for some positive integer $w$.  
 Suppose also that all vector spaces $\tor_i^A(\kkk,\kkk)$ for $i=0, \dots, d$ are finite-dimensional. This last condition is a very weak version of Koszulity. It holds, in particular, finitely generated right or left graded coherent algebras, for Koszul algebras, for $n$-Koszul algebras, for algebras of finite Backelin rate, etc. 

The vector spaces $\tor_i^A(\kkk,\kkk)$ are bigraded; the second $\inte$--grading is induced by the one on $A$. 
We introduce a notation for  the graded Betti numbers.
For a graded finitely generated right $A$-module $M=M_A$, we denote its graded 
and ordinary Betti numbers by $b_{ij}^M = \dim_k  \tor_i^A(M,\kkk)_j$
and $b_{i}^M = \dim_k  \tor_i^A(M,\kkk) =  \sum_j b_{ij}^M$.  
In particular, we have the notation $b_{ij}^\kkk = \dim_k  \tor_i^A(\kkk,\kkk)_j$ and 
 $b_{i}^\kkk = \dim_k  \tor_i^A(\kkk,\kkk) = \sum_j b_{ij}^A$ for the Betti numbers of the graded algebra $A$. 


\begin{lemma}
\label{lem:betti_eval}
Let $A$ be a connected  $\kkk$-algebra. 
Let $A_{\ge n} = A_n \oplus A_{n+1} \oplus \dots $ and  let $M$ denote the graded right $A$-module $A_{\ge n} [n] $. 
Let us denote $D_i = \max\{j | b_{tj}^\kkk \ne 0 \mbox{ for some } t \le i \}\in [0, \infty]$. 
Then for each $i\ge 0$ one has
$$
   b_i^M \le b_i^\kkk  (\dim A_n + \dots + \dim A_{n+D_i}).
$$
\end{lemma}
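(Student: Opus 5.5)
We bound the Betti numbers of $M = A_{\ge n}[n]$ by comparing $M$ with the free module $A$ through the exact sequence
$$
0 \to A_{\ge n} \to A \to A/A_{\ge n} \to 0 .
$$
Write $N = A/A_{\ge n}$, the finite-dimensional module $A_0\oplus\dots\oplus A_{n-1}$. Its long exact $\tor$-sequence gives $\tor_i^A(A_{\ge n},\kkk)\cong \tor_{i+1}^A(N,\kkk)$ for $i\ge 1$. For $i=0$ we get an injection of $\tor_0(A_{\ge n},\kkk)$ into $\tor_1(N,\kkk)$, because $\tor_0(A,\kkk)=\kkk$ sits in degree $0$ and maps isomorphically onto $\tor_0(N,\kkk)$ when $n\ge1$. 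The case $n=0$ is trivial, since then $M=A$. A shift does not change ungraded Betti numbers, so it suffices to show
$$
\dim \tor_{i+1}^A(N,\kkk) \le b_i^\kkk (\dim A_n+\dots+\dim A_{n+D_i})
$$
restricted to the relevant part. Equivalently, we can argue directly with a filtration of $M$ by degree.

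I would argue directly. The module $M$ has a filtration by submodules $M_{\ge n+j}$ whose subquotients are $A_{n+j}\otimes \kkk$, i.e. direct sums of $\dim A_{n+j}$ copies of shifted simple modules $\kkk(-j)$ (appropriately, $\kkk=k^w$ being semisimple, each a summand of $\kkk$). The key observation is graded. The graded Betti number $b^M_{i,j}$ is computed by $\tor_i(M,\kkk)_j$, and $\tor_i^A(M,\kkk)_j$ depends only on the truncation of $M$ in degrees $\le j$. One way to see this: use the minimal resolution of $\kkk$ on the left, $\tor_i(M,\kkk)=H_i(M\otimes_A P_\bullet)$, where $P_i=A\otimes \tor_i(\kkk,\kkk)$ is generated in degrees $\le D_i$. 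Then the degree-$j$ part of $M\otimes_A P_i$ is $\bigoplus_s M_{j-s}\otimes \tor_i(\kkk,\kkk)_s$. Since the generators of $P_i$ have degree $\le D_i$ and the homology at $P_i$ in degree $j$ involves only $P_{i-1},P_i,P_{i+1}$, this gives the estimate
$$
b_i^M\le \sum_j \dim (M\otimes_A P_i)_j\text{-contributions}.
$$
A cleaner version of the same estimate: $\tor_i(M,\kkk)$ is a subquotient of $M\otimes_A P_i = M\otimes_\kkk \tor_i(\kkk,\kkk)$, but that is infinite-dimensional, so we must also cut off the degrees.

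The cutoff is the main obstacle, and I would handle it through the minimality of the resolution of $M$. On the right side, a minimal free resolution of $M$ (which is generated in degree $0$ after the shift) has its $i$-th term generated in degrees $\le D_i$. This follows by induction via the comparison above, or by noting $\tor_i(M,\kkk)_j\ne0$ forces $j\le$ the maximal degree of $\tor_i(\kkk,\kkk)$ twisted by the lowest degree of $M$, through the spectral sequence of the filtration with simple subquotients $\kkk(-j)$. Hence $\tor_i(M,\kkk)=\bigoplus_{j\le D_i}\tor_i(M,\kkk)_j$. For $j\le D_i$ we have $\tor_i(M,\kkk)_j=\tor_i(M_{\le j}\text{-truncation},\kkk)_j$. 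This is the homology of $\bigoplus_{s}M_{j-s}\otimes\tor_i(\kkk,\kkk)_s$, whose dimension is at most $\sum_s \dim A_{n+j-s}\, b^\kkk_{i,s}$. Summing over $0\le j\le D_i$ gives
$$
b_i^M\le \sum_s b^\kkk_{i,s}\sum_{j=s}^{D_i}\dim A_{n+j-s}\le b_i^\kkk(\dim A_n+\dots+\dim A_{n+D_i}),
$$
which is the claim. The step needing the most care is justifying that $\tor_i(M,\kkk)$ vanishes in degrees $>D_i$. I would prove it by induction on $i$ using the filtration by $M_{\ge n+j}$ together with the long exact sequences, where the definition of $D_i$ as a maximum over $t\le i$ absorbs the degree growth from the inductive steps.
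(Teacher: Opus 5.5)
Your route is the paper's route. You cut $\tor_i^A(M,\kkk)$ off above degree $D_i$, then bound what remains by $\dim(M\otimes_\kkk X_i)_{\le D_i}\le b_i^\kkk\dim M_{\le D_i}$, where $X_i\cong\tor_i^A(\kkk,\kkk)$ generates the $i$-th term of the minimal resolution of ${}_A\kkk$. That second half is fine. The paper asserts the cutoff by appeal to Lemma~3.5 of its 2006 reference, while you propose to prove it by induction. \textbf{The cutoff is the gap, and it cannot be closed because it is false}, so the paper's proof has the same defect.

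Take $n=1$. The sequence $0\to A_{\ge 1}\to A\to\kkk\to 0$ gives $\tor_i^A(M,\kkk)_j\cong\tor_{i+1}^A(\kkk,\kkk)_{j+1}$ for all $i\ge 0$. So the cutoff would say that $\tor_{i+1}^A(\kkk,\kkk)$ lives in degrees $\le D_i+1$. This fails in three ways.
\begin{itemize}
\item For the standard algebra $A=k\langle x,y\rangle/(x^2y)$ of global dimension $2$, $\tor_1^A(M,k)\cong\tor_2^A(k,k)[1]$ is one-dimensional in degree $2>D_1=1$.
\item At $i=0$ even the inequality fails once $A$ has generators of degree $>1$. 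For $A=k[x,y]$ with $\deg x=1$, $\deg y=2$, one gets $b_0^M=2>1=b_0^\kkk\dim A_1$. So your claim that $M$ is generated in degree $0$ is also wrong in general.
\item Without finite global dimension the inequality fails for standard algebras too. For $A=k\langle x,y\rangle$ modulo all words of length $4$, $b_1^M=b_2^\kkk=16>12=b_1^\kkk(\dim A_1+\dim A_2)$.
\end{itemize}
No induction over the filtration of $M$ by $A_{\ge n+j}[n]$ can help. That filtration is infinite, with layers in every degree. The extra degree growth comes from $\tor_{i+1}(\kkk,\kkk)$, which $D_i$ does not see.

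The correct cutoff comes from the first idea you dropped. For $n\ge 1$, $\tor_i(A_{\ge n},\kkk)\cong\tor_{i+1}(A/A_{\ge n},\kkk)$ for all $i\ge 0$. The module $A/A_{\ge n}$ has a finite filtration with semisimple layers $A_j$ in degrees $0\le j\le n-1$, and the $\tor_{i+1}$ of each layer lives in degrees $\le j+D_{i+1}$. Hence $\tor_i(M,\kkk)$ is concentrated in degrees $[0,D_{i+1}-1]$. Feeding this into your second half yields
\[
b_i^M\le b_i^\kkk(\dim A_n+\dots+\dim A_{n+D_{i+1}-1}),
\]
which is the lemma with $D_i$ replaced by $D_{i+1}-1$. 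That is all Theorem~\ref{th:main_finite_gd} needs, provided one takes $D\ge\max\{D_1,\dots,D_d\}$. So the downstream growth estimates survive.
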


\begin{proof}
Obviously, $M$ is generated in degree at least 0, 
so that  $b_{ij}^M = 0$ if $i \ge 0$ and  $j<0$. By the same argument with the exact sequences of $\tor$ as in the case of connected $k$-algebras~\cite[Lemma 3.5]{piontkovski2006sets}, we have $b_{ij}^M =0$ for all $j\ge D_i$.  So, the minimal $\kkk$-module $X^M = M/ M A_{\ge 1} $ of generators of $M$ is concentrated in degrees $0, \dots, D_0$. Its $k$-dimension is bounded as 
$$
b_0^M = \dim X^M \le \dim M_0 + \dots + \dim M_{D_0} = 
\dim A_n + \dots + \dim A_{n+D_0} 
$$
$$= b_0^\kkk (\dim A_n + \dots + \dim A_{n+D_0}).
$$

Consider the minimal graded projective resolution of the left module ${}_{A}{\kkk}$:
$$
{\bf F}^{\kkk}: \dots \to F_i \to \dots \to F_1\to F_0 .
$$
Here the left projective   $\kkk$-modules $F_i$ are minimally generated by the left $\kkk$-modules $X_i \simeq F_i/A_{\ge 1} F_i $ which are concentrated in nonnegative degrees, so that $F_i = A\otimes_{\kkk} X_i$ and $b_i^{\kkk} = \dim_k X_i$. 
Then for $i\ge 1$ the module $\tor_i^A(M,\kkk) $ is the i-th homology of the complex
$$
 M \otimes_A {\bf F}^{\kkk}: \dots \to M \otimes_{\kkk}   X_i \to \dots \to M \otimes_{\kkk} X_1\to M \otimes_{\kkk} X_0. 
$$
Since the module $\tor_i^A(M,\kkk) $ is concentrated in degrees in the interval $[0,D_i]$,
we have for $i>0$
$$
b_i^M = \dim_k \tor_i^A(M,\kkk)  \le \dim (M \otimes_{\kkk}   X_i )_{\le D_i} 
\le \dim X_i \cdot \dim M_{\le D_i} =  b_i^{\kkk} (\dim A_n + \dots + \dim A_{n+D_i}).
$$ 
\end{proof}

The following theorem is a variation of~\cite[Lemma~4.2.4]{Bondal2003GeneratorsGeometry}. Here we denote by the same letter ${\mathcal O}$ both the object of $\Qgr A$ corresponding to the right module $A_A$ and its image in the derived category $D^b(\Qgr A)$. 

\begin{theorem}
\label{th:main_finite_gd}
Suppose that a connected  $\kkk$-algebra $A$ 
is such that $b_i^{\kkk} < \infty$ for all $i\ge 0$ and 
the abelian category $\Qgr A$ has finite homological dimension $d$
(so, $\ext^i_{\Qgr A}(-,-) = 0$ for all $i>d$).  

Then for all sufficiently large $D>0$ all objects $S^n {\mathcal O}$ ($n>0$)
belong to the thick subcategory  ${\mathcal C}$  in $D^b(\Qgr A)$ generated by the 
object
${\mathcal O}_{D} = \bigoplus_{j=-D}^0 S^j {\mathcal O}$. 
Moreover, the object $S^n {\mathcal O}_D$ belongs  to  ${\mathcal C}$
and  there exists an analytical  function $C(t)>0$ such that
$$
\delta_t({\mathcal O}_{D}, S^n {\mathcal O}_D) \le C(t)  (\dim A_{n-D} + \dots +\dim A_{n+D})
$$ 
for all $n>D$. 
\end{theorem}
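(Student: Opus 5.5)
We follow the strategy of \cite[Lemma~4.2.4]{Bondal2003GeneratorsGeometry}: resolve the module $A_{\ge n}[n]$ by free modules, truncate the resolution at length $d$, and read off a tower of triangles for $S^n\mathcal O$ in $D^b(\Qgr A)$.

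\emph{Step 1: the resolution.} Fix $n>0$ and put $M = A_{\ge n}[n]$. Since $A/A_{\ge n}$ is finite-dimensional, it is torsion. Hence the image of $M$ in $\Qgr A$ is isomorphic to the image of $A[n]$, that is, to $S^n\mathcal O$ (up to the sign convention for the shift). Take the minimal graded free resolution $\dots\to P_1\to P_0\to M\to 0$ with $P_i = \bigoplus_j A[-j]^{b^M_{ij}}$.

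\emph{Step 2: bounding the generators.} Choose $D\ge D_{d+1}$, where $D_i$ is as in Lemma~\ref{lem:betti_eval}. By the proof of that lemma, $b^M_{ij}\ne0$ only for $0\le j\le D_i\le D$. Therefore every $P_i$ with $i\le d$ is a finite sum of the objects $S^{-j}\mathcal O$ with $0\le j\le D$, which are summands of $\mathcal O_D$ (again up to the sign convention). Lemma~\ref{lem:betti_eval} also gives
\[
\operatorname{rk} P_i = b_i^M \le b_i^{\kkk}(\dim A_n+\dots+\dim A_{n+D}).
\]

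\emph{Step 3: truncation.} Let $K = \ker(P_{d-1}\to P_{d-2})$ be the $d$-th syzygy, so there is an exact sequence $0\to K\to P_{d-1}\to\dots\to P_0\to M\to 0$. This gives a class in $\ext^{d}_{\Qgr A}(\pi M, \pi K)$. Continuing the resolution one more step, $K$ receives a surjection from $P_d$ with kernel $K'$; the resulting class of $\pi M$ lies in $\ext^{d+1}_{\Qgr A}(\pi M,\pi K')$, which vanishes because $\Qgr A$ has homological dimension $d$.

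The standard Kelly/Bondal--Van den Bergh argument then shows that $\pi M$ is a direct summand of the complex $\pi(P_d\to P_{d-1}\to\dots\to P_0)$. Equivalently, the stupid-truncation tower of this complex of length $d+1$ has cones $\pi P_i[i]$. Thus $S^n\mathcal O\oplus E'$ admits a tower whose factors are shifts $\mathcal O_D[i]$ with $0\le i\le d$. The number of factors is at most $\sum_{i\le d} b_i^M$, since each free summand $A[-j]$ is a summand of one copy of $\mathcal O_D$.

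This step is the main obstacle. One must check carefully that the vanishing of $\ext^{d+1}$ in $\Qgr A$ (not in $\mathsf{Gr}A$) really splits off $\pi M$ as a summand of the truncated complex. The key point is that the morphism $\pi M\to \pi K'[d+1]$ in $D^b(\Qgr A)$ is zero, so the triangle built from the truncation splits.

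\emph{Step 4: the complexity estimate.} From Step 3, $\delta_t(\mathcal O_D, S^n\mathcal O)\le \sum_{i=0}^d b_i^M e^{it}$. Applying this to each summand $S^{n+j}\mathcal O$ of $S^n\mathcal O_D$, with $-D\le j\le 0$ and $n+j>0$ once $n>D$, and summing gives
\[
\delta_t(\mathcal O_D,S^n\mathcal O_D)\le \Bigl(\sum_{i=0}^d b_i^{\kkk} e^{it}\Bigr)(D+1)\,(\dim A_{n-D}+\dots+\dim A_{n+D}).
\]
Here we used subadditivity of $\delta_t$ on direct sums. The same argument shows that $S^n\mathcal O$ and $S^n\mathcal O_D$ lie in $\mathcal C$. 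We may therefore take
\[
C(t) = (D+1)\sum_{i\le d} b_i^{\kkk} e^{it},
\]
which is positive and analytic.
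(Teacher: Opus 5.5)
Your proposal is correct and follows the same route as the paper. You resolve $A_{\ge n}[n]$, bound its Betti numbers by Lemma~\ref{lem:betti_eval}, split $S^n\mathcal O$ off a stupid truncation of the resolution using the vanishing of $\ext^{>d}_{\Qgr A}$ (the Bondal--Van den Bergh argument), and sum over the summands of $S^n\mathcal O_D$.

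Where your bookkeeping differs from the paper's, yours is the right one. The splitting needs the $d+1$ terms $P_0,\dots,P_d$ that you use. The paper's $d$-term truncation $\pi F^N_{[1..d]}$ already fails for $A=k[x,y]$ with $d=1$, where $\mathcal O(n)$ is not a summand of $\pi P_0=\mathcal O^{n+1}$. Your factor $D+1$ in $C(t)$ is also needed: the paper's equality over the summands $S^{n+j}\mathcal O$ holds only as subadditivity, and the windows of $\dim A_m$ overlap.
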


\begin{proof}
Consider the minimal projective resolution of the right module $N = (A/A_{\ge n}) [n] $:
$$
{\bf F}^{N}: \dots \to F_d \to \dots \to F_1 \to F_0 . 
$$
Here $F_0 = A[n]$ and the image of the last map is $M = A_{\ge n} [n]$. 
So,  the subcomplex $\dots \to F_d\to \dots \to F_1$ forms the minimal projective resolution 
of $M$. So, the terms $F_i$ for $i\ge 1$ here have the form $F_i = X_i \otimes_\kkk A$, where $X_i$ is a finite-dimensional graded $\kkk$-module of $k$-dimension $\dim X_{ij} = b_{ij}^N =  b_{i-1,j}^M$. 
This means that for each $i =1, \dots, d$
one has $F_i = \oplus_{s=1}^w \oplus_{n_s} P_s (-n_s) $,
where $P_s$ are the indecomposable projective $A$-modules, the total number of summands 
is $b_{i-1}^M$, and the numbers $n_s$ belong to the interval $[0, D_{i-1}]$ (where $D_{i-1}$ is defined in Lemma~\ref{lem:betti_eval}).

Let  $\pi $ denote  the natural functor from  $ \Mod\mbox{-}A$ to $ \Qgr A$. 
It is exact, so that the image $\pi {\bf F}^{N}$ of ${\bf F}^{N}$ is exact. 
It is proved in~\cite[Lemma~4.2.4]{Bondal2003GeneratorsGeometry} that the object $S^n \mathcal O = \pi M$ of $D^b(\Qgr A)$
 is a direct summand of the truncation   
$$
\pi {\bf F}^{N}_{[1..d]}:   0\to \pi F^d \to \dots \to \pi F^1 \to 0
$$
of $\pi {\bf F}^{N}$.
Then we get the tower 
\begin{eqnarray*}
\xymatrix@=.4cm{
  0 \ar[rr]^{} & &  E_1 \ar[rr]^{} \ar[ld]^{} & &E_2 \ar[r]^{} \ar[ld]^{}&\cdots \ar[r]^{}& E_{d-1}\ar[rr]^{} & &E_d\cong \pi {\bf F}^{N}_{[1..d]} \oplus X
 \ar[ld]^{}  \\
  &  \pi F_d [d-1] \ar@{-->}[lu]_{} & & \pi F_{d-1} [d-2]   \ar@{-->}[lu]_{}& & \cdots  & & \pi F_1  \ar@{-->}[lu]_{}
   }
    \end{eqnarray*} 
 for some objects  $E_1, \dots, E_{d-1}, X$ of $D^b(\Qgr A)$.
Since $\pi P_s$ is a direct summand of ${\mathcal O}$  for each $s$, we see that for each $i =1, \dots, d$
the object $\pi F_i = \oplus_{s=1}^w \oplus_{n_s} \pi P_s (-n_s) $ is a direct summand of 
${\mathcal O}_{D}^{b_i^M}$ for each $D \ge \max\{D_0, \dots, D_{d-1}\}$.  Thus, there exist 
 objects $E_1',\dots, E_d', X'$ of $D^b(\Qgr A)$ and a tower 
\begin{eqnarray*}
\xymatrix@=.4cm{
  0 \ar[rr]^{} & &  E_1' \ar[rr]^{} \ar[ld]^{} & &E_2' \ar[r]^{} \ar[ld]^{}&\cdots \ar[r]^{}& E_{d-1}'\ar[rr]^{} & &E_d'\cong S^n \mathcal O \oplus X'
\ar[ld]^{}  \\
  &  {\mathcal O}_{D}^{b_{d-1}^M} [d-1] \ar@{-->}[lu]_{} & &   {\mathcal O}_{D}^{b_{d-2}^M} [d-2] \ar@{-->}[lu]_{}& & \cdots  & &  {\mathcal O}_{D}^{b_{0}^M} \ar@{-->}[lu]_{}
   }
.
    \end{eqnarray*} 
By the definition of complexity and Lemma~\ref{lem:betti_eval}, we conclude that
$$
\delta_t({\mathcal O}_{D}, S^n {\mathcal O}) 
\le \sum_{i=1}^d b_{i-1}^M e^{it}
\le \sum_{i=1}^d b_{i-1}^\kkk e^{it}  (\dim A_n + \dots + \dim A_{n+D_i})
\le  C(t) (\dim A_n + \dots + \dim A_{n+D}),
$$
where $  C(t) =  \sum_{i=1}^d b_{i-1}^\kkk e^{it} $.
Since $S^n {\mathcal O}_{D}= \bigoplus_{j=-D}^0 S^{n+j} {\mathcal O}$, 
we obtain for all $n>D$ the desired inequalities
$$
\delta_t({\mathcal O}_{D}, S^n {\mathcal O}_{D}) = 
\sum_{j=-D}^0  \delta_t({\mathcal O}_{D}, S^{n+j} {\mathcal O})
\le C(t) (\dim A_{n-D} + \dots + \dim A_{n+D}).
$$
\end{proof}

\begin{corollary}
\label{cor:general_ineq}
In the notation of Theorem~\ref{th:main_finite_gd},
the categorical entropy of the Serre twist on ${\mathcal C}$ satisfies the inequality
$$
\mathsf{h}_t (S) \le  \entropy (A) . 
$$
If the algebra $A$ has subexponential growth, then the polynomial categorical entropy of the same functor satisfies
$$
\hpol_t (S) \le   \gkd A  -1. 
$$
\end{corollary}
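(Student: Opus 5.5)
The corollary follows from the bound of Theorem~\ref{th:main_finite_gd}, applied with the generator ${\mathcal O}_D$ of ${\mathcal C}$. By \cite[Lemma~2.5]{Dimitrov2014DynamicalCategories} and \cite[Lemma~2.6]{fan2021categorical}, the entropies $\mathsf{h}_t(S)$ and $\hpol_t(S)$ do not depend on the choice of generator. We may therefore compute them with $\mathcal G={\mathcal O}_D$. Theorem~\ref{th:main_finite_gd} shows that $S$ preserves ${\mathcal C}$, because $S^n{\mathcal O}_D\in{\mathcal C}$. Write $a_m=\dim A_m$ and $\sigma_n=a_{n-D}+\dots+a_{n+D}$. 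The basic estimate for $n>D$ is
$$
\delta_t({\mathcal O}_D,S^n{\mathcal O}_D)\le C(t)\,\sigma_n .
$$

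For the exponential part, take logarithms and divide by $n$:
$$
\mathsf{h}_t(S)=\lim_n \frac1n\log\delta_t \le \limsup_n\frac{\log C(t)}{n}+\limsup_n\frac{\log\sigma_n}{n}.
$$
The first term vanishes. For the second, use $\sigma_n\le (2D+1)\max_{|j|\le D}a_{n+j}$. Given $\varepsilon>0$, we have $a_m\le c_\varepsilon e^{m(\entropy(A)+\varepsilon)}$ for all $m$. Hence $\sigma_n\le (2D+1)c_\varepsilon e^{(n+D)(\entropy(A)+\varepsilon)}$, which gives $\limsup\frac1n\log\sigma_n\le \entropy(A)+\varepsilon$. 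Letting $\varepsilon\to 0$ yields $\mathsf{h}_t(S)\le\entropy(A)$. If $\entropy(A)=-\infty$, so that $A$ is finite-dimensional, then $\sigma_n=0$ for large $n$. In that case $\delta_t=0$, and the inequality holds trivially.

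For the polynomial part, assume $A$ has subexponential growth, so $\entropy(A)=0$. The first part gives $\mathsf{h}_t(S)\le 0$. I expect the main obstacle to be the normalization in the definition of $\hpol_t$, which subtracts $n\,\mathsf{h}_t(S)$. If $\mathsf{h}_t(S)<0$ (or $-\infty$), the subtracted term has the wrong sign and no bound follows from the estimate. I would handle this as follows. When $\mathsf{h}_t(S)=0$, we are in the situation of (\ref{eq: pol_ent_pol_growth}), and $\hpol_t(S)=\limsup_n \log\delta_t/\log n$. When $\mathsf{h}_t(S)\neq 0$, I would follow the convention of \cite{fan2021categorical}, where $\hpol_t$ is compared only when the entropy vanishes, and record that case separately; the substantive case is $\mathsf{h}_t(S)=0$.

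In the case $\mathsf{h}_t(S)=0$, the estimate gives
$$
\hpol_t(S)\le \limsup_n\frac{\log C(t)+\log\sigma_n}{\log n}=\limsup_n\frac{\log\sigma_n}{\log n}.
$$
Next I bound $\sigma_n\le \sum_{m\le n+D}a_m$. Recall that $\gkd A=\limsup_N \log(\sum_{m\le N}a_m)/\log N$, and that $\log(n+D)/\log n\to1$. Together these give $\hpol_t(S)\le\gkd A$. This loses one unit, so the estimate must be sharpened to reach $\gkd A-1$. Instead, I use $\sigma_n\le(2D+1)\max_{|j|\le D}a_{n+j}$ together with the identity $\gkd A=\varlimsup\log a_m/\log m+1$, stated in the introduction. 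The ratio $\log(n+j)/\log n$ tends to $1$ uniformly in $|j|\le D$. It follows that $\limsup_n\log\sigma_n/\log n\le \varlimsup_m \log a_m/\log m=\gkd A-1$, which gives $\hpol_t(S)\le\gkd A-1$.
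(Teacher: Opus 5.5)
\textbf{Gap in the polynomial part.} Your exponential estimate and your window-sum bookkeeping are the same direct computation as in the paper, done more carefully. By the bookkeeping I mean the bound $\sigma_n\le(2D+1)\max_{|j|\le D}a_{n+j}$ followed by the comparison of $\log(n+j)$ with $\log n$. The gap is in the polynomial part. You correctly observe that the upper bound only yields $\mathsf{h}_t(S)\le 0$. You then set aside the case $\mathsf{h}_t(S)<0$ by appeal to a ``convention'' of Fan--Fu--Ouchi, but no such restriction exists. Definition~\ref{def:cat_entropy} defines $\hpol_t$ for every real $t$, and the corollary claims the bound for every $t$. If $\mathsf{h}_t(S)<0$, the subtracted term $-n\,\mathsf{h}_t(S)$ grows linearly and your estimate gives nothing. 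As written, you prove the polynomial bound only for those $t$ at which $\mathsf{h}_t(S)=0$, and you have not shown that this is every $t$. The paper's proof passes over the same point by simply asserting $\mathsf{h}_t(S)=0$ from the upper bound.

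\textbf{The missing lower bound.} What you need is $\delta_t(\mathcal{O}_D,S^n\mathcal{O}_D)\ge 1$ for all $t$ and $n$, for $A$ infinite-dimensional so that $\mathcal{O}\neq 0$. It follows from a t-structure argument.
\begin{itemize}
\item Take any tower (\ref{eq: tower}) with $\mathcal{G}=\mathcal{O}_D$ and $E_p\cong S^n\mathcal{O}_D\oplus E'$. Apply the cohomological functor $H^0$ of the standard t-structure on $D^b(\Qgr A)$.
\item The exact sequences $H^0(E_{i-1})\to H^0(E_i)\to H^0(\mathcal{O}_D[n_i])$ show the following: if $H^0(\mathcal{O}_D[n_i])=0$ for every $i$, then each map $H^0(E_{i-1})\to H^0(E_i)$ is onto, so $H^0(E_p)=0$.
\item But $H^0(E_p)$ has the nonzero object $S^n\mathcal{O}_D$ of the heart $\Qgr A$ as a direct summand. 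So $H^0(\mathcal{O}_D[n_i])\neq 0$ for some $i$.
\item Since $\mathcal{O}_D$ itself lies in the heart, this forces $n_i=0$. Hence every admissible sum $\sum_i e^{n_it}$ contains the term $e^{0}=1$.
\end{itemize}
Therefore $\mathsf{h}_t(S)\ge 0$, and so $\mathsf{h}_t(S)=0$ for every $t$ when $A$ has subexponential growth. Formula (\ref{eq: pol_ent_pol_growth}) then applies at every $t$, and your final estimate completes the proof.
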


\begin{proof}
Denote $a_n = \dim A_n$ for $n\ge 0$. 
By Definition~\ref{def:cat_entropy} and Theorem~\ref{th:main_finite_gd}, we get
\begin{equation*}
\mathsf{h}_t( S)= \lim_{n \to \infty} \frac{1}{n} \log \delta_t(\mathcal{O_D}, S^n\mathcal{O_D}) \le
  \limsup_{n \to \infty} 
\frac{1}{n} \log \left( C(t) (a_{n-D}+\dots +a_{n+D}) \right)
\end{equation*}
\begin{equation*}
= \limsup_{n \to \infty} \frac{1}{n} \log a_n = \entropy (A). 
\end{equation*}
It follows that for algebras of subexponential growth we have $\mathsf{h}_t( S) = 0$. Therefore,
\begin{equation*}
\hpol_t(F) = \limsup_{n \to \infty} \frac{\log \delta_t (\mathcal{O_D}, S^n (\mathcal{O_D}))}{\log n}
\le 
\limsup_{n \to \infty} \frac{\log C(t) +\log (a_{n-D}+\dots +a_{n+D}) }{\log n}
\end{equation*}
\begin{equation*}
= \limsup_{n \to \infty} \frac{\log a_{n}}{\log n} 
= \gkd (A)  -1. 
\end{equation*}
\end{proof}

Consider the subcategory ${\mathcal C}$  from Theorem~\ref{th:main_finite_gd}.
If the algebra $A$ has finite global dimension, 
it coincides with the category of compact objects in  $D^b(\Qgr A)$~\cite[Prop.~4.2.11, 2]{Bondal2003GeneratorsGeometry}. 
If, in addition,  the algebra $A$ is right graded coherent, this category is  equivalent to the category 
$D^b(\qgr A)$~\cite[Lemmata~4.3.2, 4.3.3]{Bondal2003GeneratorsGeometry}.  

\begin{corollary}
\label{cor:main_finite_dim}
Suppose that a  connected $\kkk$-algebra $A$ is right graded coherent and has finite global dimension. Then
the categorical entropy of the Serre twist on the category ${\qgr A}$ satisfies
the inequality.
$$
\mathsf{h}_t (S) \le  \entropy (A)  
$$
If the algebra $A$ has subexponential growth, then the polynomial entropy of this functor satisfies the inequality 
$$
\hpol_t (S) \le  \gkd A  -1. 
$$
\end{corollary}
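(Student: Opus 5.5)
The plan is to reduce Corollary~\ref{cor:main_finite_dim} to Corollary~\ref{cor:general_ineq}. Three things have to be checked: the hypotheses of Theorem~\ref{th:main_finite_gd} hold, the thick subcategory ${\mathcal C}$ is equivalent to $D^b(\qgr A)$ compatibly with the Serre twist, and categorical entropy is invariant under such an equivalence.

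First, the hypotheses. Since $A$ is finitely generated and right graded coherent, $\kkk$ is finitely presented. Moreover every term of a minimal graded resolution of $\kkk$ can be chosen finitely generated, since kernels of maps between finitely generated free modules are finitely presented over a coherent algebra. Finite global dimension $d$ then gives $b_i^\kkk<\infty$ for all $i$, and $b_i^\kkk=0$ for $i>d$. Next, by~\cite[Section~4]{Bondal2003GeneratorsGeometry}, finite global dimension of $A$ implies that $\Qgr A$ has finite homological dimension (at most $d$). This uses the standard comparison of $\ext$ in $\Qgr A$ with $\ext$ in $\mathsf{Gr}(A)$ via the section functor and the torsion functor, which has finite cohomological dimension when $\gd A<\infty$. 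So Theorem~\ref{th:main_finite_gd} applies, and we obtain ${\mathcal O}_D$ and the subcategory ${\mathcal C}$.

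Second, the identification. As remarked after Corollary~\ref{cor:general_ineq}, ${\mathcal C}$ is the subcategory of compact objects of $D^b(\Qgr A)$ by~\cite[Prop.~4.2.11]{Bondal2003GeneratorsGeometry}. By \cite[Lemmata~4.3.2, 4.3.3]{Bondal2003GeneratorsGeometry}, the natural functor $D^b(\qgr A)\to D^b(\Qgr A)$ is fully faithful with essential image ${\mathcal C}$. This functor is induced by the inclusion $\qgr A\subset\Qgr A$, which commutes with the shift $s$, so it intertwines the Serre twists up to natural isomorphism. In particular, ${\mathcal O}_D$, viewed in $D^b(\qgr A)$, is a classical generator there.

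Third, invariance. Complexity $\delta_t$ is defined purely in terms of towers of triangles and direct summands, so it is preserved by triangulated equivalences that commute with $S$. Also $\mathsf{h}_t$ and $\hpol_t$ do not depend on the choice of generator. Hence the entropies of $S$ on $D^b(\qgr A)$ coincide with those on ${\mathcal C}$, and Corollary~\ref{cor:general_ineq} gives both inequalities; in the subexponential case we additionally use $\mathsf{h}_t(S)=0$, which is the first inequality. The main obstacle I expect is the verification that $\Qgr A$ has finite homological dimension. If the citation does not cover the non-Noetherian coherent case, I would argue directly: compute $\ext_{\Qgr}(\pi M,\pi N)$ as $\varinjlim_n \ext_{\mathsf{Gr}}(M_{\ge n},N)$, and bound the degree using the projective dimension of $M_{\ge n}$, which is at most $d$.
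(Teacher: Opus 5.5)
Your proposal is correct and matches the paper's argument. The paper also deduces the corollary from Corollary~\ref{cor:general_ineq}, identifying $\mathcal C$ with the compact objects of $D^b(\Qgr A)$ and then with $D^b(\qgr A)$ via \cite[Prop.~4.2.11, Lemmata~4.3.2, 4.3.3]{Bondal2003GeneratorsGeometry}, compatibly with $S$; it leaves implicit the hypothesis checks that you spell out.

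One small remark: since $\gd A<\infty$, the resolution of $(A/A_{\ge n})[n]$ in Theorem~\ref{th:main_finite_gd} is already finite. So $S^n\mathcal O=\pi(A_{\ge n}[n])$ is quasi-isomorphic to the truncated complex itself, and the tower estimate does not need the finite homological dimension of $\Qgr A$. In particular, you need not commit to the bound ``at most $d$''.
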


\section{Monomial algebras of polynomial growth}

\label{sec: mon_pol}

Suppose $A$ is a monomial quotient of a quiver algebra $k Q$ for some finite quiver $Q$.
If $\mathcal{O, X}$ are the images of $A$ and a coherent $A$-module $X$ in $\mathbf{D}^b (\mathsf{qgr}(A))$, we have 
$$
\delta_t(\mathcal{O, X}) 
= \rk_{\bar A}(\bar X) \in {\mathbf Z}_{\ge 0},
$$
where the rank $\rk_{\bar A}(\bar X)$ 
is the minimal number of copies of the images $\bar A$ of $A$ 
in $\qgr A$ that cover the image $\bar X$ of $X$ in $\qgr A$, that is, the minimal $s$ such that there is an epimorphism ${\bar A}^{\oplus s} \to \bar X \to 0$ (equivalently, $\bar X$ is a direct summand of ${\bar A}^{\oplus s}$)~\cite[Lemma 6.4]{lu2023entropy}. 

 Let 
$S$ be the Serre twist endofunctor on $\mathbf{D}^b (\mathsf{qgr}(A))$ for the above $A$.  Then  
$$
\hpol_t(S) = \limsup_{n \to \infty} \frac{\log \delta_t (\mathcal{O}, S^n (\mathcal{O}))-n \mathsf{h}_t (S)}{\log n} = 
\limsup_{n \to \infty} \frac{\log  \rk_{\bar A}(\bar A(n)) -n \mathsf{h}_t (S)}{\log n}, 
$$
where $\bar A(n)$ is the $n$-th degree shift of $\bar A$.

Since $\mathsf{h}_t (S)$ is a constant \cite[Th. 6.5]{lu2023entropy},
we get

\begin{proposition}
For a finitely presented monomial algebra $A$, $\hpol_t(S)$ is  constant.
\end{proposition}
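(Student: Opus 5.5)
The plan is to read the claim directly off the displayed formula preceding the statement. That formula expresses $\hpol_t(S)$ entirely through the integer sequence $r_n := \rk_{\bar A}(\bar A(n))$ and the function $\mathsf{h}_t(S)$. So the argument reduces to two checks: that neither ingredient depends on $t$, and that the generator used is legitimate.

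First I justify computing with the generator $\mathcal O$. By \cite{lu2023entropy}, $\mathbf{D}^b(\mathsf{qgr}(A))$ admits a classical generator. By \cite[Lemma~2.6]{fan2021categorical}, $\hpol_t(S)$ does not depend on the choice of generator. If $\mathcal O$ itself is not a generator, I would instead use a generator of the form $G=\bigoplus_{j} S^{j}\mathcal O$ (finite sum), as in the cited work. The complexity $\delta_t(G, S^n G)$ is then a finite sum of terms of the form $\delta_t(\mathcal O, \cdot)$, bounded above and below by constants times shifted ranks $r_{n+j}$. Such bounded shifts do not change a $\limsup$ of $\log(\cdot)/\log n$. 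This is the step I expect to need the most care. In particular, I must confirm that the rank lemma \cite[Lemma 6.4]{lu2023entropy}, which gives $\delta_t(\mathcal O,\mathcal X)=\rk_{\bar A}(\bar X)$, applies uniformly, so that the bounds involve no $t$-dependent constants beyond harmless ones like $e^{n_i t}$ with bounded $n_i$. Such factors contribute $O(1)$ to the logarithm and vanish after dividing by $\log n$.

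Second, I observe that $\delta_t(\mathcal O, S^n\mathcal O)=r_n$ is independent of $t$. The reason is that the optimal tower uses only unshifted copies of $\mathcal O$, since $\bar A(n)$ is a direct summand of $\bar A^{\oplus r_n}$. By \cite[Th. 6.5]{lu2023entropy}, $\mathsf{h}_t(S)=c$ is a constant, namely $\entropy(A)$. Substituting both facts gives
$$\hpol_t(S)=\limsup_{n\to\infty}\frac{\log r_n - nc}{\log n},$$
and the right-hand side contains no $t$. Hence $\hpol_t(S)$ is a constant function of $t$, possibly equal to $\pm\infty$, which the definition allows.
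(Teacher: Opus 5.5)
Your proposal is correct and follows the paper's argument. The paper likewise substitutes $\delta_t(\mathcal O,S^n\mathcal O)=\rk_{\bar A}(\bar A(n))$ from \cite[Lemma 6.4]{lu2023entropy} and the constancy of $\mathsf{h}_t(S)$ from \cite[Th.~6.5]{lu2023entropy} into the defining formula for $\hpol_t(S)$, so no $t$ remains; it works with $\mathcal O$ itself as the generator, as the rank lemma presupposes, so your fallback to $\bigoplus_j S^j\mathcal O$ is not needed.
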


\begin{example}
Consider the quiver $Q$ of the form
\begin{eqnarray*}
 \xymatrix{
  x \ar@(l,u)[]^a \ar@< 2pt>[r]^b  &  y  \ar@(r,u)[]_c
  }\\
    \end{eqnarray*}
Let's calculate the (polynomial) entropy of the Serre twist on the derived category of coherent sheaves on the noncommutative variety defined by its path algebra $B=kQ$.
Let $P_x$ and $P_y$ be the right ideals in $B$ generated by the vertices.  For each $m\ge 1$, the $m$-truncations of the modules $ P_x $ and $P_y$ are $(P_y)_{\ge m} \simeq P_y (-m)$ and  $(P_x)_{\ge m} \simeq B_{\ge m-1} (-1)  = (P_x\oplus P_y)_{\ge m-1} (-1)$, so that $(P_x)_{\ge m} \simeq P_x(-m) \oplus P_y^{m-1}(-m)$. Then $B_{\ge m} = (P_x\oplus P_y)_{\ge m} \simeq P_x(-m) \oplus P_y^{m}(-m) $. 
Since $B_{\ge 2m}\simeq P_x(-2m) \oplus P_y^{2m}(-2m) $, we get for the degree shift an isomorphism $B_{\ge 2m}(m) \simeq P_x(-m) \oplus P_y^{2m}(-m)$. Then
there exists an epimorphism of the truncations 
$$
B_{\ge m} \oplus B_{\ge m} \twoheadrightarrow B_{\ge 2m}(m),
$$
which induces an epimorphism $\overline B \oplus \overline B \twoheadrightarrow \overline B(m)$ in $\qgr B$. Thus, $\delta_t (\mathcal{O}, S^m (\mathcal{O})) = 
\rk_{\bar B}(\bar B(m)) \le 2$. It follows that
$$
\hpol_t(S) = \limsup_{n \to \infty} \frac{\log \delta_t (\mathcal{O}, S^n (\mathcal{O}))-\mathsf{h}_t (S)}{\log n}
=
\limsup_{n \to \infty} \frac{\log  \rk_{\bar B}(\bar B(n))}{\log n} = 0.
$$
\end{example}

\begin{theorem}
\label{th:quiver_polynomial_entropy}
Let $A$ be a finitely presented monomial algebra of at most polynomial growth. In the notation above, there exists some $C>0$ such that $\delta_t (\mathcal{O}, S^m (\mathcal{O})) \le C$ for all $m\ge 0$. As a corollary,
$$
\hpol_t(S) = 0.
$$
\end{theorem}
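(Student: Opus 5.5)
By \cite[Lemma 6.4]{lu2023entropy}, $\delta_t(\mathcal O, S^m\mathcal O)=\rk_{\bar A}(\bar A(m))$, so it suffices to bound this rank uniformly in $m$. Once $\rk_{\bar A}(\bar A(m))\le C$ for all $m$, the rest is formal. Since $A$ has polynomial growth, $\mathsf h_t(S)=\mathsf h(A)=0$ by \cite[Th.~6.5]{lu2023entropy}. Then $\hpol_t(S)=\limsup \log\delta_t/\log n\le \limsup \log C/\log n=0$, while $\delta_t\ge 1$ gives $\hpol_t(S)\ge 0$. The proof therefore reduces to a combinatorial statement about the right modules $A_{\ge m}$: the truncation $A_{\ge Nm}(m)$, for a suitable fixed multiple, is a quotient of $A_{\ge m}^{\oplus C}$ up to finite-dimensional modules. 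Here $A_{\ge Nm}$ is isomorphic in $\qgr A$ to $A$. This is exactly the pattern of the example with the two-loop quiver.

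\emph{Structure of truncations.} For a monomial algebra, $A_{\ge m}$ is a direct sum over the nonzero paths $u$ of length exactly $m$ of the cyclic modules $uA$. Moreover $uA\cong$ a \emph{tail module} $T_w(-m)$, where $T_w$ is the right annihilator quotient. This module depends only on the suffix $w$ of $u$ of length less than the maximal relation length $L$ (together with the terminal vertex). So there are finitely many isomorphism types $T_1,\dots,T_r$ of such modules, and
\[
A_{\ge m}\cong \bigoplus_{j=1}^r T_j(-m)^{c_j(m)},
\]
where $c_j(m)$ is the number of nonzero paths of length $m$ with tail type $j$. Thus in $\qgr A$ we have $\bar A(m)\cong\bigoplus_j \bar T_j^{c_j(m)}$, where $\bar T_j$ is independent of $m$. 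The task becomes choosing a bounded number of copies of $\bar A$ that cover this sum.

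\emph{Using polynomial growth.} For monomial algebras (Ufnarovski's graph), polynomial growth means that the Ufnarovski graph of tails has no two distinct cycles through a common vertex; its strongly connected components are single cycles arranged in a DAG. A long nonzero path spends all but boundedly many steps winding around a chain of cycles $Z_1\to Z_2\to\dots\to Z_k$, with $k$ bounded. I will decompose $\bar A\cong\bigoplus_j \bar T_j^{c_j(M)}$ for large $M$ and aim to show the following. Each $\bar T_j$ that appears in $\bar A(m)$ with multiplicity $c_j(m)$, growing polynomially, is itself "absorbed" up to torsion: if $j$ lies on a cycle $Z$ that is not terminal in the DAG, then $\bar T_j$ is not simple. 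Its contribution can be rewritten via the exit paths, so multiplicities collapse. Concretely, I expect a relation of the form $\bar T_j\cong \bar T_j\oplus\bar T_{j'}$, or more precisely $T_j\supseteq$ a submodule isomorphic, up to finite length and shift, to $T_j\oplus T_{j'}$. In the example this is $P_x\cong P_x(-m)\oplus P_y^{m}(-m)$ with $\bar P_y$ idempotent-like: $\bar P_y^{\,2m}$ is a quotient of $\bar P_y^{\,m}$.

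The \textbf{main obstacle} is precisely this absorption: showing that $\bar T^{\,c}$ is a quotient of $\bar T^{\,c'}$ for $c'$ bounded independently of $c$, whenever $T$ lies downstream of a cycle. My plan is induction on the depth in the DAG of cycles. For a terminal cycle $Z$, paths continuing from $T$ are eventually unique, so $T_{\ge n}$ is a single shifted copy of a tail. Epimorphisms $T\twoheadrightarrow T_{\ge n}(n')$ then let one cover multiplicities by truncating further: a copy of $\bar A$ realized as $A_{\ge m'}$ with $m'$ large contains many copies of downstream tails. For a non-terminal cycle, one uses the splitting of its truncation into the cycle part plus exits, as in the example, together with the inductive hypothesis. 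Choosing $A_{\ge Nm}$ to represent $\bar A$ gives multiplicity $c_j(Nm)\ge c_j(m)$ for downstream tails, since $c_j$ is nondecreasing along the DAG in polynomial growth. This produces the epimorphism $\bar A^{\oplus C}\to\bar A(m)$ with $C$ at most the number of tail types.
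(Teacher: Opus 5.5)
Your reduction is sound. The rank formula $\delta_t=\rk_{\bar A}(\bar A(m))$, the vanishing $\mathsf h_t(S)=0$, and the decomposition $A_{\ge m}\cong\bigoplus_j T_j(-m)^{c_j(m)}$ into finitely many tail types are all correct. The decomposition is the same combinatorics the paper gets by passing to the Ufnarovski quiver, where $B_{\ge d}\cong\bigoplus_j P_j^{b_j^d}(-d)$. The gap is in the step you single out as the main obstacle, which is both false and unnecessary.

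\textbf{Multiplicities of tails never collapse.} In the example, $P_y$ is essentially $k[c]$: its only nonzero graded submodules are the truncations $(P_y)_{\ge n}$, which have finite colength. So $\bar P_y$ is a simple object of $\qgr B$. Hence $\bar P_y^{\,m}$ has length $m$ and cannot surject onto $\bar P_y^{\,2m}$, contrary to your claim. The same holds for any tail on a terminal cycle, so the DAG induction targets a phenomenon that does not exist. In the example, the $2m$ copies of $P_y(-m)$ in $B_{\ge 2m}(m)$ are covered simply because $B_{\ge m}^{\oplus 2}$ also contains $2m$ copies of $P_y(-m)$. A single copy of $\bar A$, written as a deep truncation, already contains many copies of each tail.

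\textbf{Your final comparison uses the wrong quantities.} To build $\bar A^{\oplus C}\twoheadrightarrow\bar A(m)$, write both objects at a common truncation level $d$:
\begin{itemize}
\item $\bar A$ is represented by $A_{\ge d}\cong\bigoplus_j T_j(-d)^{c_j(d)}$;
\item $\bar A(m)$ is represented by $A_{\ge d+m}(m)\cong\bigoplus_j T_j(-d)^{c_j(d+m)}$.
\end{itemize}
These have the same summands with the same shift. By contrast, comparing $A_{\ge Nm}$ with $\bar A(m)\cong\bigoplus_j\bar T_j^{c_j(m)}$ pits summands $\bar T_j(-Nm)$ against $\bar T_j$. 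An epimorphism exists as soon as $c_j(d+m)\le C\,c_j(d)$ for all $j$. So you need an \emph{upper} bound on the growth ratio; the monotonicity $c_j(Nm)\ge c_j(m)$ you invoke points the wrong way and gives nothing.

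\textbf{The missing ingredient is the paper's counting argument.}
\begin{itemize}
\item Each $c_j(n)$ is eventually a quasi-polynomial, by rationality of the generating function together with polynomial growth.
\item All constituent polynomials have the same degree $D$. This follows from the circuit-insertion Lemma~\ref{lem:pol_growth_quiver}, which you would apply to the Ufnarovski graph, since $c_j(n)$ counts its paths ending at the vertex of tail type $j$.
\item Hence $C_1n^D\le c_j(n)\le C_2n^D$ for large $n$.
\item Choosing $d\ge m$ large, as the paper does with $((m+d)/d)^D<2$, gives $c_j(d+m)\le (2C_2/C_1)\,c_j(d)$.
\end{itemize}
Without equal degrees across residue classes this ratio need not be bounded, so this is exactly where polynomial growth enters.

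Your opening $N=2$ reduction, which needs $c_j(2m)\le C\,c_j(m)$, also works once this bound is in hand. However, the resulting $C$ is of order $2^DC_2/C_1$, governed by the growth rather than by the number of tail types. For a chain of $D+1$ loops, $c(2m)/c(m)\to 2^D$.
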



Before we prove the theorem, let us discuss path algebras of polynomial growth. 
It is well-known~\cite[Sect. 5.6]{Ufnarovskij1995CombinatorialAlgebra} that the growth of the  algebra $B = kQ$ is at most polynomial if and only if there are no  two simple circuits with a common vertex in $Q$. Note that the noncommutative quasi-coherent schemes defined by such algebras have been classified in~\cite{holdaway2014path}. We do not use this classification.

\begin{lemma}
\label{lem:pol_growth_quiver}
Let $b_i^m$ denotes the number of paths of length $m$ which end at the vertex $i$ of a finite quiver $Q=(V,E)$.  If the growth of the algebra $B=kQ$ is at most polynomial, then there exists  $N>0$
such that for all large enough $m$ and all $d>0$ we have 
$$
    N  b_i^{m+d} \ge b_i^{m}
$$
for each $i\in V$.
\end{lemma}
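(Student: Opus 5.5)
The plan is to build an explicit map $\Phi$ from paths of length $m$ ending at $i$ to paths of length $m+d$ ending at $i$, and to show that each fibre of $\Phi$ has size bounded by a constant $N$ depending only on $Q$. This gives $b_i^m \le N b_i^{m+d}$ directly. The idea is to discard a short initial segment of the path and replace it with a longer segment that runs backwards around a cycle.

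\emph{Step 1: locating a cycle vertex near the start.} Let $r=|V|$ and take $m>r$. A path $p$ of length $m$ passes through $m+1>r$ vertices (counted with multiplicity). Hence some vertex repeats among its first $r+1$ vertices. Between the two occurrences of this vertex, $p$ traverses a closed walk, and that closed walk contains a simple circuit. So among the first $r+1$ vertices of $p$ there is a vertex lying on some simple circuit of $Q$. Let $u$ be the first such vertex, at position $k\le r$. Let $C$ be a simple circuit through $u$, chosen canonically (say, the least one in a fixed ordering of the finitely many simple circuits). Write $p=p'p''$, where $p'$ is the prefix of length $k$ ending at $u$ and $p''$ is the remaining suffix of length $m-k$, which starts at $u$ and ends at $i$.

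\emph{Step 2: the map $\Phi$.} Running backwards around $C$ from $u$ for $k+d$ steps determines a unique path $c_{k+d}$ along $C$ of length $k+d$ that ends at $u$. This works because every vertex of $C$ has an incoming arrow of $C$, so the backward walk never gets stuck. Set $\Phi(p)=c_{k+d}\,p''$. This is a path of length $m+d$ ending at $i$.

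\emph{Step 3: bounding the fibres.} Suppose $q=\Phi(p)$ is given. Then $p$ is recovered from $q$ once we know $k$ and the prefix $p'$. Indeed, $p''$ is the suffix of $q$ of length $m-k$, and $p$ is $p'p''$. There are at most $r+1$ choices of $k$. For each $k$, there are at most $|E|^k\le |E|^r$ paths $p'$ of length $k$. So every fibre of $\Phi$ has size at most $N:=(r+1)\max(1,|E|)^r$. Therefore $b_i^m\le N b_i^{m+d}$ for all $m>r$, all $d>0$, and all $i\in V$.

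\emph{Expected obstacle.} The only delicate point is making sure that the extension always has exactly the right length, whatever the residue of $d$ modulo the length of the cycle. This is why the plan discards the prefix and walks backwards along $C$ for an arbitrary number of steps, rather than inserting full turns of the cycle. Full turns would only change lengths by multiples of $|C|$, which would create periodicity issues.

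The polynomial growth hypothesis does not seem to be needed for this step. In the later argument for Theorem~\ref{th:quiver_polynomial_entropy} it will matter that circuits are disjoint, but here it enters at most through the finiteness of the constants.
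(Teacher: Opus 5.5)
Your argument is correct, but it is organized differently from the paper's. The paper proceeds in two stages. Let $s$ be the least common multiple of the lengths of the simple circuits. The paper inserts $s/l$ turns of the circuit through the first circuit vertex of a path; this circuit is unique by polynomial growth. That gives an injection and hence $b_i^{m+s}\ge b_i^m$, which reduces the claim to shifts $0\le r<s$. The paper then handles these shifts by deleting a prefix of length $r$.

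You instead treat every $d$ at once. You cut off the prefix of length $k\le |V|$ ending at the first circuit vertex and replace it by a backward walk of exactly $k+d$ steps around a circuit, so each fibre is bounded by the number of possible discarded prefixes. This removes the lcm and residue bookkeeping and never uses uniqueness of circuits, so, as you observe, it works for every finite quiver.

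More importantly, your construction supplies what the paper's second stage is missing. There, the suffix map from paths of length $m+r$ to paths of length $m$ has fibres of size at most $C_r$. That yields $b_i^{m+r}\le C_r b_i^m$, which is the reverse of the needed bound. Moreover, a path of length $m$ that starts at a source cannot be extended backwards at all. Discarding the prefix before extending, as you do, is exactly what gets around this.

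The only thing the paper's first stage gives that yours does not is the constant $1$ along progressions of step $s$. This is not needed for the lemma or for its later use.
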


\begin{proof}
Let $s\le n!$ be the least common multiple of the lengths of all simple circuits in $Q$. 

For $m$ large enough (namely, for $m\ge |V|$), each path $p$ of length $m$ contains a simple circuit. Let $j$ be the first circuit vertex in this path. Suppose $j$ belongs to a simple circuit $c$ of length $l\le n$. Since the growth is polynomial, this circuit $c$ is uniquely defined.  If we replace the vertex $j$ of $p$ by the circuit $c$ taken $s/l$ times, we get another path 
of length $m+s$ which ends at $i$. This path is uniquely defined by $p$ and vice versa, so that we get  the inequality $b_i^{m+s} \ge  b_i^{m}$. 
It follows that for each $d>0$ with $r\simeq d \mod s $ we have $
     b_i^{m+d} \ge b_i^{m+r}$. So, it is sufficient to find 
     $N>0$ 
     such that $N b_i^{m+r} \ge  b_i^{m}$ for each $0\le r < s$.

Now, let $q$ be a path of length $m+r$ which ends at $i$. Decompose it as a composition of paths $q=q'p$, where $q'$ has length $r$. Then the path $p $ is uniquely defined by $q$, has length $m$, and ends at $i$.
Note that for each such path $p$
of length $m$ there exist no more than  $C_r$ corresponding paths $q$ of length $m+r$, where 
$C_r$ is the number of all paths of $Q$ of length $r$. Then $b_i^{m+r} \ge C_r^{-1} b_i^{m}$. Thus, for 
$
N = \max \{C_r \,|\, r =0, \dots, s-1 \}
$
we have $N b_i^{m+r} \ge b_i^{r}$. 
\end{proof}

\medskip
Now, we are ready to prove Theorem~\ref{th:quiver_polynomial_entropy}.
\medskip

\begin{proof}[Proof of Theorem~\ref{th:quiver_polynomial_entropy}]
 
 
 Let $Q_A$ be the Ufnarovski quiver of $A$. Smith and Holdaway have constructed an algebra map $f: A\to kQ_A$ which induces an equivalence of categories $\Qgr A \equiv \Qgr (kQ_A)$~\cite[Theorem~4.2]{Holdaway2012AnQuivers}. Moreover, it induces an equivalence of categories $\qgr A \equiv  \qgr (kq_A)$ and of derived categories $\qgr A \equiv  \qgr (kq_A)$~\cite[Theorem 5.5]{lu2023entropy}.  Hence, it induces an equivalence of derived categories such that the object $\mathcal{A} \in \Ob \mathbf{D}^b (\mathsf{qgr}(A))$ is mapped under this equivalence to the image of  $kQ_A$ in $\mathbf{D}^b (\mathsf{qgr}(kQ_A))$. This equivalence commutes with the Serre twist $S$, so, 
 the polynomial category entropy $h_t(S) $ of the Serre twist is the same for these two algebras. 
 Moreover, the Hilbert series of these two algebras are  equal up to a polynomial, that is, $\dim A_n = \dim (k Q_A)_n$ for $n>> 0$. It follows that the path algebra $k Q_A$ has the same polynomial growth as $A$.
So, it is sufficient to prove the theorem for the path algebra $k Q_A$ in place of $A$.

In other word, we can assume that $A= kQ$ is a path algebra. Then we can use  Lemma~\ref{lem:pol_growth_quiver}. In the notation of this lemma, 
fix $i\in V$. Since the generating function $b_i(z) = \sum_{m
 \ge 0} b_i^m z^m $ is rational and its coefficients $b_i^m $ have at most polynomial growth, then $b_i^m $ is a quasi-polynomial as a function of $m$. This means that there exist an integer $T$ and integral polynomials $f_1(m), \dots, f_T(m)$ such that  for large enough $m$, one has 
 $
 b_i^m = f_{\alpha} (m)
 $
  iff $T\, | \, (m-\alpha)$. 
 It follows from Lemma~\ref{lem:pol_growth_quiver}
 that all polynomials $f_1(m), \dots, f_T(m)$ have the same degree, say, $D$. Then there exist
two positive constants  $C_1 = C_1(i), C_2 = C_2(i)$ such that $C_1 m^D \le b_i^m  \le C_2 m^D$ for all large enough $m$. Now, for a fixed large enough  $m$, let 
 $d>0$ be large enough to fulfil the inequality $\left( (m+d)/d \right)^D < 2  $.  Then 
 $$
    b_i^{m+d}  \le C_2 (m+d)^D \le 2 C_2 d^D  \le (2 C_2/C_1) b_i^d.
 $$
 
 Let $C \ge \max\{ 2 C_2(i)/C_1(i)\,|\,  i\in V\}$ be an integer. For $ V
 = \{v_1, \dots, v_n\} $, let $P_j$ denote the elementary projective $B$-module $v_j B$ (where $j=1, \dots, n$). Then $B \simeq \oplus_{j=0}^n P_j$ as a right $B$-module. Hence $B_{\ge d} \simeq \oplus_{j=0}^n P_j^{b_j^d} (-d)$ and  
 $B_{\ge m+d} \simeq \oplus_{j=0}^n P_j^{b_j^{m+d}} (-m-d)$.  Since $b_j^{m+d}  \le C b_j^d$, there exists an epimorphism from the module $B_{\ge d}^C = \oplus_{j=0}^n P_j^{C b_j^d} (-d) $ onto  the module $
 B_{\ge m+d} (m)\simeq \oplus_{j=0}^n P_j^{b_j^{m+d}} (-d)$. It induces an epimorphism 
$\overline B^C \twoheadrightarrow \overline B(m)$ in $\qgr B$. 
 Thus, $\delta_t (\mathcal{O}, S^m (\mathcal{O})) = 
\rk_{\bar B}(\bar B(m)) \le C$. 
 \end{proof}

\bibliographystyle{abbrv}

\bibliography{mybibfile}

\end{document}